\theoremstyle{plain}
\newtheorem{thm}{Theorem}[section]
\newtheorem{prop}[thm]{Proposition}
\newtheorem{lem}[thm]{Lemma}
\newtheorem{cor}[thm]{Corollary}
\newtheorem{fact}[thm]{Fact}
\theoremstyle{definition}
\newtheorem{defn}[thm]{Definition}
\newtheorem{eg}[thm]{Example}
\theoremstyle{remark}
\newtheorem{rem}[thm]{Remark}
\newtheorem{claim}[thm]{Claim}
\DeclareMathOperator{\Cl}{Cl}
\DeclareMathOperator{\codim}{codim}
\DeclareMathOperator{\Def}{Def}
\DeclareMathOperator{\Exc }{Exc}
\DeclareMathOperator{\Ext}{Ext}
\DeclareMathOperator{\Sing}{Sing}
\DeclareMathOperator{\Spec}{Spec}
\DeclareMathOperator{\depth}{depth}
\DeclareMathOperator{\Ker}{Ker}
\DeclareMathOperator{\Image}{Im}
\DeclareMathOperator{\Gal}{Gal}
\DeclareMathOperator{\Gr}{Gr}
\DeclareMathOperator{\sm}{sm}
\DeclareMathOperator{\Hom}{Hom}
\DeclareMathOperator{\Art}{Art}
\begin{document}

\title
[Deformations of weak $\mathbb{Q}$-Fano $3$-folds]
{Deformations of weak $\mathbb{Q}$-Fano $3$-folds}
\subjclass[2010]{Primary  14B07, 14J30, 14J45; Secondary 14B15}
\keywords{Deformation theory, weak $\mathbb{Q}$-Fano 3-folds, $\mathbb{Q}$-smoothing}

\author{Taro Sano}
\address{Department of Mathematics, Faculty of Science, Kobe University, 1-1, Rokkodai, Nada-ku,  Kobe, 657-8501, Japan}
\email{tarosano@math.kobe-u.ac.jp}
\thanks{The author was partially supported by Max Planck Institute for Mathematics, JSPS fellowship for Young Scientists,  JSPS KAKENHI
Grant Numbers JP15J03158, JP16K17573, JP24224001, and JST tenure track program. }

\maketitle

\begin{abstract} 
	We prove that a weak $\mathbb{Q}$-Fano $3$-fold with terminal singularities has unobstructed deformations. 
	By using this result and computing some invariants of a terminal singularity, 
	we provide two results on global deformation of a weak $\mathbb{Q}$-Fano 
	$3$-fold.   
 We also treat a stacky proof of the unobstructedness of deformations of a $\mathbb{Q}$-Fano 
 $3$-fold.
\end{abstract}

\tableofcontents

\section{Introduction} 

In this paper, we consider algebraic varieties over the complex number field $\mathbb{C}$. 

\begin{defn}
Let $X$ be a normal projective $3$-fold. We say that $X$ is a {\it weak $\mathbb{Q}$-Fano $3$-fold} 
(resp. {\it $\mathbb{Q}$-Fano $3$-fold}) 
if $X$ has only terminal singularities and $-K_X$ is a nef and big (resp. ample) divisor. 
\end{defn}

Weak $\mathbb{Q}$-Fano $3$-folds naturally appear in the birational study of a $\mathbb{Q}$-Fano $3$-fold. 
(cf. \cite{MR2177195}) 
In this paper, we first study the deformation of a weak $\mathbb{Q}$-Fano $3$-fold. 

The following is a main result of this paper. 

\begin{thm}\label{thm:wqfano3intro}
	Deformations of a weak $\mathbb{Q}$-Fano $3$-fold are unobstructed.  
	\end{thm}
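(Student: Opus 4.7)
My plan is to apply the $T^1$-lifting criterion of Ran--Kawamata to the deformation functor $\Def_X$. Letting $A_n := \mathbb{C}[t]/(t^{n+1})$, the criterion asserts that $\Def_X$ is smooth (and hence unobstructed) provided that for every compatible system of deformations $\{\mathcal{X}_n / \Spec A_n\}_{n \geq 0}$ of $X$, the natural restriction
\[ T^1(\mathcal{X}_n/A_n) \longrightarrow T^1(\mathcal{X}_{n-1}/A_{n-1}) \]
is surjective for every $n \geq 1$, where $T^1(\mathcal{X}_n/A_n)$ denotes the module of first-order infinitesimal deformations of $\mathcal{X}_n$ over $A_n$.

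Since $X$ is three-dimensional with only terminal singularities, its singular locus is finite, and each thickening $\mathcal{X}_n$ has the same underlying space. The sheaves $\mathcal{T}^i_{\mathcal{X}_n/A_n} := \mathcal{E}xt^i_{\mathcal{O}_{\mathcal{X}_n}}(\Omega_{\mathcal{X}_n/A_n}, \mathcal{O}_{\mathcal{X}_n})$ for $i \geq 1$ therefore have zero-dimensional support, so $H^p(\mathcal{T}^i) = 0$ for $p > 0$, $i \geq 1$. The local-to-global spectral sequence then fits $T^1(\mathcal{X}_n/A_n)$ into a short exact sequence whose outer terms are the ``global'' piece $H^1(\mathcal{X}_n, T_{\mathcal{X}_n/A_n})$ and the ``local'' piece $H^0(\mathcal{X}_n, \mathcal{T}^1_{\mathcal{X}_n/A_n})$; I would verify the lifting property on each piece separately.

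For the local piece, I would pass to the canonical index-$1$ cyclic cover at each singular point. Such a cover has Gorenstein (in fact hypersurface) terminal singularities, whose deformations are trivially unobstructed; taking $\mu_r$-invariants then yields the required $T^1$-lifting in the $\mathbb{Q}$-Gorenstein category, matching the lifting condition imposed by the global deformation. For the global piece, the short exact sequence $0 \to (t^n) \to A_n \to A_{n-1} \to 0$ together with flatness of $\mathcal{X}_n/A_n$ reduces surjectivity of $H^1(T_{\mathcal{X}_n/A_n}) \to H^1(T_{\mathcal{X}_{n-1}/A_{n-1}})$ to the vanishing of an appropriate $H^2$ of a tangent-type sheaf on $X$, where I would invoke Kawamata--Viehweg vanishing, using essentially that $-K_X$ is nef and big rather than just ample.

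The principal obstacle is that $H^2(X, T_X)$ does not in general vanish---even for smooth Fano $3$-folds---so Kodaira-type vanishing cannot be applied naively to $T_X$ itself. I expect the way through is to exploit the $\mathbb{Q}$-Gorenstein structure by replacing $X$ with its index-$1$ covering Deligne--Mumford stack (the viewpoint adopted in the author's parallel stacky argument), where a suitable Serre duality pairs the obstructing cohomology group with an $H^1$ of a sheaf twisted by $-K_X$, after which Kawamata--Viehweg closes the argument.
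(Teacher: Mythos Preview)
Your proposal contains a genuine gap at the decisive step. You correctly identify that $H^2(X,T_X)$ need not vanish, and you propose to salvage the argument by passing to the index-one covering stack $\mathfrak{X}$, where Serre duality gives
\[
\Ext^2_{\mathcal{O}_{\mathfrak{X}}}(\Omega^1_{\mathfrak{X}},\mathcal{O}_{\mathfrak{X}})\;\simeq\;H^1\bigl(X,\iota_*(\Omega^1_{X'}\otimes\omega_{X'})\bigr)^\vee,
\]
and then to kill the right-hand side by Kawamata--Viehweg. But Kawamata--Viehweg does not apply here: the sheaf $\iota_*(\Omega^1_{X'}\otimes\omega_{X'})$ is not a line bundle, and no Nakano-type statement is available for merely nef and big $-K_X$. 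The vanishing of this $H^1$ in the ample case (Theorem~\ref{thm:QFano3unobs}) is obtained via a Lefschetz hyperplane argument, which genuinely requires ampleness. In the weak case the obstruction \emph{space} can be nonzero; what must be shown is that the obstruction \emph{classes} vanish, and for this the stacky shortcut is not enough. This is exactly why the paper states that Theorem~\ref{thm:wqfano3intro} is strictly stronger than the stacky Theorem~\ref{thm:QFano3unobs}.

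The paper's route is quite different from either branch of your plan. Rather than splitting $T^1$ into local and global pieces, one applies $T^1$-lifting to the \emph{pair} functor $\Def_{(X,D)}$ for a smooth $D\in|{-}mK_X|$ disjoint from $\Sing X$, and only afterwards descends to $\Def_X$ via smoothness of the forgetful map. The $T^1$-space is identified (after restricting to the smooth locus and Serre-dualising) with the dual of $H^2$ of a logarithmic sheaf, and one takes the $m$-cyclic cover $Z\to X$ branched along $D$ so as to land on a Gorenstein terminal $3$-fold with a divisor $\Delta$. The base-change surjectivity then splits into two statements on $Z$: the vanishing $H^3(Z,\Omega^1_Z(\log\Delta)(-\Delta))=0$, which \emph{is} accessible to Kawamata--Viehweg together with Hodge symmetry on a resolution; and the freeness of $H^2(Z_n,\Omega^1_{Z_n/A_n}(\log\Delta_n)(-\Delta_n))$, which is handled not by a vanishing theorem but by a $d\log$ argument in the style of Namikawa, comparing $H^1(\mathcal{K}_{(Z,\Delta)})$ with $H^1(\Omega^1_Z(\log\Delta)(-\Delta))$ and lifting Picard classes through the thickenings. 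Neither of these ingredients appears in your outline, and they are what replaces the unavailable vanishing of the obstruction space.
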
 
	
	The author proved the unobstructedness for a $\mathbb{Q}$-Fano $3$-fold (\cite[Theorem 1.7]{MR3419958}). 
	Minagawa proved it for a weak Fano $3$-fold with only terminal Gorenstein singularities(\cite[Main Theorem (1)]{MR1860839}). 
	Theorem \ref{thm:wqfano3intro} is a generalization of these results. 
	
	By using Theorem \ref{thm:wqfano3intro}, we study the existence of a $\mathbb{Q}$-smoothing 
of a weak $\mathbb{Q}$-Fano $3$-fold. 
Recall that a $3$-fold terminal singularity has a {\it $\mathbb{Q}$-smoothing}, that is, 
a deformation to a $3$-fold with only quotient singularities. 
In general, a weak $\mathbb{Q}$-Fano $3$-fold does not have a $\mathbb{Q}$-smoothing 
(cf. \cite[Example 3.7]{MR1860839}). 
	We give a partial result in this direction as follows. 
	
	\begin{thm}\label{thm:wqfanoQsmintro}
	 Let $X$ be a weak $\mathbb{Q}$-Fano $3$-fold. 
	\begin{enumerate}
	\item[(i)] Then $X$ can be deformed to a weak $\mathbb{Q}$-Fano $3$-fold $X_t$ with the following property: for all 
	$p_t \in \Sing X_t$, the invariant ``$\mu^{(1)}(X_t, p_t)$'' vanishes. 
	\item[(ii)] Assume that $X$ has a ``global index one cover'' which is $\mathbb{Q}$-factorial. 
	Then $X$ can be deformed to a $3$-fold with only quotient singularities and $A_{1,2}/4$-singularities. 
	\end{enumerate}
	\end{thm}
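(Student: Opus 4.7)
Both parts of the theorem rest on Theorem \ref{thm:wqfano3intro}: since $\Def(X)$ is smooth, it suffices to exhibit a first-order deformation with the desired local behaviour, which then integrates to an honest family. The weak $\mathbb{Q}$-Fano property is preserved on the general fibre by openness of nefness and bigness of $-K$.

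For part (i), I would argue by induction on the total invariant $\sum_{p\in \Sing X}\mu^{(1)}(X,p)$. The local input, presumably built into the definition of $\mu^{(1)}$ in the preceding sections, is that for each terminal $3$-fold singular point $(X,p)$ with $\mu^{(1)}(X,p)>0$ there is a first-order deformation $\eta_p\in T^1_{(X,p)}$ whose integration strictly decreases $\mu^{(1)}$. To globalise, I would use the local-to-global exact sequence
\[
H^1(X,T_X)\longrightarrow T^1_X \longrightarrow \bigoplus_{p\in \Sing X} T^1_{(X,p)} \longrightarrow H^2(X,T_X),
\]
and check that $(\eta_p)_p$ lies in the image. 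One attempts $H^2(X,T_X)=0$ via a Kawamata-Viehweg type argument on a reflexive resolution of $T_X$, exploiting bigness and nefness of $-K_X$; failing a blanket vanishing, one verifies the vanishing of the specific obstruction class of $(\eta_p)_p$ by direct analysis. Theorem \ref{thm:wqfano3intro} then integrates $\eta$ to a family $X\rightsquigarrow X_t$ with strictly smaller total $\mu^{(1)}$, and one iterates.

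For part (ii), let $\pi\colon Y\to X$ be the global index-one cover, a cyclic cover with Galois group $G$ of order equal to the Gorenstein index of $X$. Then $Y$ is terminal Gorenstein and, by hypothesis, $\mathbb{Q}$-factorial. The plan is to work $G$-equivariantly: deformations of $X$ correspond to $G$-equivariant deformations of $Y$, and the equivariant unobstructedness follows from (or can be proved in parallel with) Theorem \ref{thm:wqfano3intro}. Applying smoothing results for $\mathbb{Q}$-factorial terminal Gorenstein $3$-folds in the $G$-equivariant setting produces a deformation $Y\rightsquigarrow Y_t$ in which the cDV points that can be smoothed $G$-equivariantly disappear; the surviving $G$-invariant ones should be classified, via the representation-theoretic analysis of $G$-invariant cDV singularities, as exactly those of type $A_{1,2}$, with $G$-quotient an $A_{1,2}/4$ singularity. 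Smooth points of $Y_t$ lying on the $G$-fixed locus descend to quotient singularities on $X_t = Y_t/G$, which gives the stated dichotomy.

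The main obstacle in part (i) is the globalisation step: $H^2(X,T_X)$ does not vanish a priori for a singular weak $\mathbb{Q}$-Fano $3$-fold, so one must either establish a partial vanishing or show by a direct analysis that the chosen obstruction class vanishes. In part (ii) the hardest point is the local equivariant classification — determining precisely which cDV points of $Y$ resist $G$-equivariant smoothing and verifying that these are exactly the ones whose quotients give the $A_{1,2}/4$ singularities in the conclusion.
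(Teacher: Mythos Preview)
Your outline captures the overall shape---find a good first-order direction, integrate it via Theorem~\ref{thm:wqfano3intro}, iterate---but the globalisation step is where the content lies, and the mechanism you propose does not go through. For (i), the vanishing $H^2(X,T_X)=0$ is not available for a \emph{weak} $\mathbb{Q}$-Fano $3$-fold (the Lefschetz-type argument needs ampleness), so the naive local-to-global sequence cannot be completed, and your fallback ``direct analysis of the specific obstruction class'' is exactly the missing idea. The paper supplies it by passing to a $\mathbb{Z}_m$-equivariant log resolution $\tilde{Y}\to Y$ of the global cyclic cover $Y\to X$ branched along $D\in|{-}mK_X|$, and using the local-cohomology sequence
\[
H^1(X',\Omega^2_{X'}(-K_{X'}))\xrightarrow{\ \oplus\psi_i\ }\bigoplus_i H^2_{E_i}(\tilde{X},\mathcal{F}^{(0)})\xrightarrow{\ \oplus\Psi_i\ } H^2(\tilde{X},\mathcal{F}^{(0)})
\]
on $\tilde{X}=\tilde{Y}/\mathbb{Z}_m$. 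The crucial computation is that whenever $\mu^{(1)}(U_i,p_i)>0$ the map $\Psi_i$ fails to be injective on the $i$-th summand: by Serre duality its dual factors through the $d\log$ map $H^1(\tilde{V}_i,\mathcal{O}_{\tilde{V}_i}^*)^{(1)}_{\mathbb{C}}\to H^1(\tilde{V}_i,\Omega^1_{\tilde{V}_i})^{(1)}$, whose cokernel has dimension precisely $\mu^{(1)}(U_i,p_i)$. A nonzero element of $\Ker\Psi_i$ then lifts to a global $\eta$ whose restriction at $p_i$ lies outside $\Image(\nu_i)_*\subset\Ker\phi_i$, and \cite[Theorem~1.4]{MR3692020} shows such an $\eta$ genuinely moves the singularity. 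So $\mu^{(1)}$ is not merely a local counter to induct on; it is exactly what produces the kernel needed to bypass the missing $H^2$ vanishing.

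For (ii), your equivariant-smoothing plan is a conceivable alternative route, but the paper again argues through the same diagram rather than deforming $Y$ and taking quotients. The $\mathbb{Q}$-factoriality of $Y$ enters by forcing the restriction $H^1(\tilde{Y},\mathcal{O}_{\tilde{Y}}^*)\otimes\mathbb{C}\to H^1(V'_i,\mathcal{O}_{V'_i}^*)\otimes\mathbb{C}$ to vanish, which via $d\log$ makes the composite $\Psi_i\circ\varphi_i^{-1}\circ\phi_i$ zero; hence any local class in $\Image\phi_i$ lifts globally. The ``local equivariant classification'' you would need at the end is precisely Fact~\ref{fact:phiU0}: $\phi_U=0$ if and only if $(U,p)$ is a quotient singularity or an $A_{1,2}/4$-singularity. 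Note also that the paper's ``global index one cover'' is the cover determined by a smooth $D\in|{-}mK_X|$ for suitable $m$, not necessarily of degree equal to the Gorenstein index.
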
	

The invariant $\mu^{(1)}$ of singularities appeared in Theorem \ref{thm:wqfanoQsmintro}(i) is an analogue 
of the invariant appeared in \cite[Section 2]{MR1358982} (See Definition \ref{defn:DBinvterminalb11}). 
An {\it $A_{1,2}/4$-singularity} in (ii) is the terminal singularity $(x^2+y^2+z^3 +u^2 =0)/ \mathbb{Z}_4(1,3,2,1)$. See Theorem \ref{thm:QsmQfactIndex1}(ii) for the precise meaning of 
the ``global index one cover'' in (ii). 

Minagawa (\cite[Main Theorem (2)]{MR1860839}) proved that a weak Fano $3$-fold with Gorenstein terminal singularities 
can be deformed to that with only ordinary double points. 
Moreover, he proved that, if it is $\mathbb{Q}$-factorial, then it admits a smoothing, that is, 
a deformation to a smooth weak Fano $3$-fold (\cite[Main Theorem (3)]{MR1860839}). 
Theorem \ref{thm:wqfanoQsmintro} is a partial generalization of these results. 

\vspace{2mm}

In section \ref{section:stack}, we also treat the canonical covering stack associated to a $3$-fold with terminal singularities or 
a surface with klt singularities. 
We shall explain the unobstructedness of deformations of a $\mathbb{Q}$-Fano $3$-fold 
and $\mathbb{Q}$-Gorenstein deformations of a weak log del Pezzo surface.

\subsection{Comments on the proof} 
	To prove Theorem \ref{thm:wqfano3intro}, we apply the $T^1$-lifting property (\cite{MR1144440}, \cite{MR1144434}, \cite{MR1658200}) to the deformation functor of a pair $(X,D)$ of a weak $\mathbb{Q}$-Fano $3$-fold $X$  
	and a smooth divisor $D \in |{-}mK_X|$ for a sufficiently large $m>0$. 
	The author proved the unobstructedness for a smooth weak Fano variety by using 
	the $E_1$-degeneration of the Hodge to de Rham spectral sequence 
	associated to the log de Rham complex (\cite[Theorem 1.1]{MR3264677}). 
	Since we do not have such a statement on a $3$-fold 
	with terminal singularities, we prove the necessary statements directly by using 
	some arguments similar to those in the proof of \cite[Theorem 1]{MR1286924}. 
	
	To prove Theorem \ref{thm:wqfanoQsmintro}, we use $2$ types of invariants of singularities.
	For (i), we use the invariant $\mu^{(1)}$ which is a version of the invariant $\mu$ 
	used in \cite[Section 2]{MR1358982}. 
	For (ii), we use the coboundary map $\phi_U$ which is used in \cite[Section 1]{MR1358982} and  \cite{MR3692020}. 
	
Finally let us give comments on Theorem \ref{thm:QFano3unobs}. 
Although Theorem \ref{thm:wqfano3intro} is stronger, we shall prove the unobstructedness for a 
	$\mathbb{Q}$-Fano $3$-fold $X$ by associating the canonical covering stack $\mathfrak{X} \rightarrow X$. 
	We shall show that the obstruction space $\Ext^2_{\mathcal{O}_{\mathfrak{X}}}(\Omega^1_{\mathfrak{X}}, \mathcal{O}_{\mathfrak{X}})$ 
	for $\mathfrak{X}$ vanishes. 
	An advantage of this method is that the canonical sheaf $\omega_{\mathfrak{X}}$ is invertible and the $\Ext$ group is the dual of some cohomology group.

\section{Unobstructedness of deformations of a weak $\mathbb{Q}$-Fano $3$-fold}
In this section, we shall prove Theorem \ref{thm:wqfano3intro} about the unobstructedness. 
We first prepare necessary materials on infinitesimal deformations.  

\subsection{Preliminaries on deformation functors}

Let $X$ be an algebraic scheme and $D$ its closed subscheme. 
Let $\Art_{\mathbb{C}}$ be the category of Artinian local $\mathbb{C}$-algebras 
with residue field $\mathbb{C}$.  
We consider the deformation functors $\Def_X, \Def_{(X,D)} \colon \Art_{\mathbb{C}} \rightarrow (Sets)$ 
of $X$ and $(X,D)$ respectively which are defined as follows (cf. \cite[Definition 2.1, 2.2]{MR3419958}). 

First, we introduce the deformation functor of an algebraic scheme. 
\begin{defn}\label{defn:defsch}(cf. \cite[1.2.1]{MR2247603}) 
Let $X$ be an algebraic scheme over $\mathbb{C}$ and $S$ an algebraic scheme over $\mathbb{C}$ with 
a closed point $s \in S$. 
A {\it deformation} of $X$ over $S$ is a pair $(\mathcal{X}, i)$, 
where $\mathcal{X}$ is a scheme flat over $S$ and 
$i \colon X \hookrightarrow \mathcal{X}$ is a closed immersion such that 
the induced morphism $X \rightarrow \mathcal{X} \times_S \{s \}$ is an isomorphism. 

Two deformations $(\mathcal{X}_1, i_1)$ and $(\mathcal{X}_2, i_2)$ over $S$ are said to be 
{\it equivalent} if there exists an isomorphism $\varphi \colon \mathcal{X}_1 \rightarrow \mathcal{X}_2$ over $S$ 
which commutes the following diagram; 
\[
\xymatrix{
X \ar@{^{(}->}^{i_1}[r] \ar@{^{(}->}^{i_2}[rd] & \mathcal{X}_1 \ar[d]^{\varphi} \\
 & \mathcal{X}_2
}
\]
Define the functor $\Def_X \colon \Art_{\mathbb{C}} \rightarrow (Sets)$ by setting 
\begin{equation}\label{functordef}
\Def_X(A):= \{ (\mathcal{X}, i): \text{deformation of } X \text{ over } \Spec A \}/({\rm equiv}),    
\end{equation}
where $({\rm equiv})$ means the equivalence introduced in the above. 
\end{defn} 

We also introduce the deformation functor of a closed immersion. 

\begin{defn}\label{defn:pairdefsch}(cf. \cite[3.4.1]{MR2247603})
Let $f \colon D \hookrightarrow X$ be a closed immersion of algebraic schemes over  $\mathbb{C}$ and $S$ an algebraic scheme over $\mathbb{C}$ with 
a closed point $s \in S$. 
A {\it deformation} of a pair $(X,D)$ over $S$ is a data $(F, i_X, i_D)$ in the cartesian diagram 
\begin{equation}\label{deformmapdiagram}
\xymatrix{
D \ar@{^{(}->}[r]^{i_D} \ar[d]^{f} & \mathcal{D} \ar[d]^{F} \\
X \ar@{^{(}->}[r]^{i_X}  \ar[d] & \mathcal{X} \ar[d]^{\Psi} \\ 
\{s \} \ar@{^{(}->}[r] & S, 
}
\end{equation}
where $\Psi$ and $\Psi \circ F$ are flat and $i_D, i_X$ are closed immersions. 
Two deformations $(F, i_D, i_X)$ and $(F', i_D', i_X')$ of $(X,D)$ over $S$ are said to be 
{\it equivalent} if there exist isomorphisms $\alpha \colon \mathcal{X} \rightarrow \mathcal{X}'$ and 
$\beta \colon \mathcal{D} \rightarrow \mathcal{D}'$ over $S$ 
which commutes the following diagram; 
\[
\xymatrix{
D \ar@{^{(}->}[r]^{i_D} \ar@{^{(}->}[rd]^{i'_{D}} & \mathcal{D} \ar[r] \ar[d]^{\beta} & \mathcal{X} \ar[d]^{\alpha} & 
X \ar@{^{(}->}[l]^{i_X} \ar@{^{(}->}[ld]^{i_X'} \\
 & \mathcal{D}' \ar[r] & \mathcal{X}'. &   
}
\] 
 
We define the functor $\Def_{(X,D)} \colon \Art_{\mathbb{C}} \rightarrow (Sets)$ by setting 
\begin{equation}\label{pairfunctordef}
\Def_{(X,D)}(A):= \{ (F, i_D, i_X): \text{deformation of } (X,D) \text{ over } \Spec A \}/({\rm equiv}),    
\end{equation}
where $({\rm equiv})$ means the equivalence introduced in the above. 
\end{defn}

We use the following notion of a $T^1$-space for the $T^1$-lifting property. 

\begin{defn}\label{defn:T1space} Let $\iota \colon D \hookrightarrow X$ be a closed immersion 
	of algebraic schemes as in Definition \ref{defn:pairdefsch}. 
Set $A_n:= \mathbb{C}[t]/(t^{n+1})$ and $B_n:= \mathbb{C}[x,y]/ (x^{n+1},y^2) \simeq A_n \otimes_{\mathbb{C}} A_1$.
	For $[ (X_n,D_n), \phi_0] \in \Def_{(X,D)}(A_n)$, let $T^1((X_n,D_n)/A_n)$  be the set of isomorphism classes of pairs 
	$((Y_n,E_n), \psi_n) $ consisting of deformations $(Y_n, E_n)$ of $(X_n, D_n)$ over $B_n$ and marking isomorphisms 
	$\psi_n \colon Y_n \otimes_{B_n} A_n 
	\rightarrow X_n$ such that $\psi_n(E_n \otimes_{B_n} A_n) = D_n$, where we use a ring homomorphism 
	$B_n \rightarrow A_n$ given by $x \mapsto t$ and $y \mapsto 0$. 
	We call  $T^1((X_n, D_n)/A_n)$ the {\it $T^1$-space} of $(X_n, D_n)$. 
	\end{defn} 
	
	We have the following interpretation of the $T^1$-space. 
	
	\begin{prop}\label{prop:T1Ext} 
		Let $X$ be a reduced algebraic scheme over $\mathbb{C}$ 
		and $D$ be its divisor such that $D \subset X_{\sm}$, where $X_{\sm} \subset X$ 
		is the smooth locus.  Let $(X_n, D_n) \in \Def_{(X,D)}(A_n)$.  
	
	Then we have 
	\begin{equation}
	T^1((X_n,D_n)/A_n) \simeq \Ext^1_{\mathcal{O}_{X_n}}(\Omega^1_{X_n/A_n}(\log D_n), \mathcal{O}_{X_n}).    
	\end{equation} 
	\end{prop}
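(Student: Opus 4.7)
The plan is to establish the bijection by the standard extension-class construction used to identify first-order deformations with $\Ext^1$, adapted here to the relative logarithmic setting, and to invert it through a local-to-global Čech argument that exploits the hypothesis $D \subset X_{\sm}$. First, note that $T^1((X_n, D_n)/A_n)$ is non-empty, as it contains the trivial lift $(X_n \otimes_{A_n} B_n,\, D_n \otimes_{A_n} B_n)$; fixing this as base point reduces the assertion to showing that $T^1$ carries a natural torsor structure under the claimed $\Ext^1$.

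To define the map $T^1 \to \Ext^1$, I would take a representative $((Y_n, E_n), \psi_n)$ and consider the square-zero ideal $I = (y) \subset B_n$, which is free of rank one over $A_n$. Flatness of $Y_n$ over $B_n$ makes $y$ a regular element on $Y_n$, and since $D_n \subset (X_n)_{\sm}$ propagates by flatness to $E_n \subset (Y_n/B_n)_{\sm}$, the log sheaves $\Omega^1(\log D_n)$ and $\Omega^1(\log E_n)$ are well-defined and locally free near the respective divisors. Pulling back the fundamental exact sequence for the closed immersion $X_n \hookrightarrow Y_n$ then yields
\begin{equation*}
0 \longrightarrow \mathcal{O}_{X_n} \longrightarrow \Omega^1_{Y_n/A_n}(\log E_n) \otimes_{\mathcal{O}_{Y_n}} \mathcal{O}_{X_n} \longrightarrow \Omega^1_{X_n/A_n}(\log D_n) \longrightarrow 0,
\end{equation*}
whose extension class lies in $\Ext^1_{\mathcal{O}_{X_n}}(\Omega^1_{X_n/A_n}(\log D_n), \mathcal{O}_{X_n})$. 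That this assignment descends to the equivalence relation on $T^1$ is a direct unravelling.

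For bijectivity, I would cover $X_n$ by affine opens $\{U_i\}$ chosen so that $D_n \cap U_i$ is either empty or a smooth principal divisor inside $(U_i)_{\sm}$. On each such affine the trivial lift is available as base point, and any other lift of $(U_i, D_n \cap U_i)$ differs from it by an $A_n$-linear log derivation, i.e.\ a section of $\Hom_{\mathcal{O}_{U_i}}(\Omega^1_{U_i/A_n}(\log(D_n \cap U_i)), \mathcal{O}_{U_i})$. Restricting a global deformation to the $U_i$ and comparing with trivial lifts produces local log derivations; their differences on overlaps define a Čech $1$-cocycle whose class in $\Ext^1$ matches the one constructed above. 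Conversely, any such cocycle reassembles the trivial local lifts into a global deformation, giving surjectivity.

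The main technical obstacle I anticipate is the local step on affines meeting $\Sing X_n$ but not $D_n$, where $X_n$ need not be a local complete intersection. The saving feature is that one only classifies lifts whose existence is already guaranteed by the trivial lift, so the set of local lifts is automatically a torsor under $\Hom(\Omega^1_{U_i/A_n}, \mathcal{O}_{U_i})$, and the higher terms of the cotangent complex, which would obstruct the existence of a lift, do not enter the classification once a base point is fixed. The log modification along $D_n$ causes no additional difficulty, as $D_n$ sits in the relative smooth locus and $\Omega^1_{X_n/A_n}(\log D_n)$ is therefore unambiguous, locally free near $D_n$, and agrees with $\Omega^1_{X_n/A_n}$ away from it.
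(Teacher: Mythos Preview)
Your forward map agrees with the paper's: from a lift $(Y_n,E_n)$ one extracts the extension
\[
0 \to \mathcal{O}_{X_n} \to \Omega^1_{Y_n/A_n}(\log E_n)|_{X_n} \to \Omega^1_{X_n/A_n}(\log D_n) \to 0.
\]
The gap is in your inverse. Your \v{C}ech argument rests on the claim that on an affine open $U_i$ every lift over $B_n$ is isomorphic to the trivial one, so that the comparison produces a log derivation. This is false whenever $U_i$ meets $\Sing X$: the set of isomorphism classes of lifts of an affine $U_i$ over $A_n[y]/(y^2)$ is $\Ext^1_{\mathcal{O}_{U_i}}(\Omega^1_{U_i/A_n},\mathcal{O}_{U_i})$, which is nonzero at a singular point (already one-dimensional for a node). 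Your cocycle therefore lands only in $H^1\bigl(X_n,\mathcal{H}om(\Omega^1_{X_n/A_n}(\log D_n),\mathcal{O}_{X_n})\bigr)$, detecting the locally trivial deformations but missing the $H^0\bigl(X_n,\mathcal{E}xt^1(\Omega^1_{X_n/A_n}(\log D_n),\mathcal{O}_{X_n})\bigr)$ piece of the local-to-global spectral sequence for $\Ext^1$. Your final paragraph conflates obstructions to the \emph{existence} of a lift (controlled by $\Ext^2$) with the \emph{classification} of lifts (controlled by $\Ext^1$); fixing a base point does not collapse the local $\Ext^1$ to $\Hom$.

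The paper avoids this by constructing the inverse $\Psi$ globally, without any local trivialisation. Given an extension $0\to\mathcal{O}_{X_n}\xrightarrow{\beta}\mathcal{F}\xrightarrow{\alpha}\Omega^1_{X_n/A_n}(\log D_n)\to 0$, one pulls back along $d\colon \mathcal{O}_{X_n}\to\Omega^1_{X_n/A_n}(\log D_n)$ to form the sheaf of $B_n$-algebras $\mathcal{A}:=\mathcal{F}\times_{\Omega^1_{X_n/A_n}(\log D_n)}\mathcal{O}_{X_n}$, with multiplication $(e_1,g_1)(e_2,g_2)=(g_1e_2+g_2e_1,g_1g_2)$ and $y\mapsto(\beta(1),0)$; the divisor $E_n$ is then cut out by the ideal $\mathcal{I}:=\mathcal{F}(-D_n)\times_{\Omega^1_{X_n/A_n}(\log D_n)(-D_n)}\mathcal{O}_{X_n}(-D_n)$, and flatness over $B_n$ is verified by the local criterion. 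This direct construction handles the singular locus without ever asserting that local lifts are trivial.
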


	\begin{proof}
	We can prove this by a standard argument 
	 using $B_n \simeq A_n \otimes_{\mathbb{C}} A_1$. We write the proof for the convenience of the reader. 
	
	We shall construct two homomorphisms \[
	\Phi \colon T^1((X_n/D_n)/A_n) \rightarrow 
	\Ext^1_{\mathcal{O}_{X_n}}(\Omega^1_{X_n/A_n}(\log D_n), \mathcal{O}_{X_n}), 
	\]
	\[
	\Psi \colon \Ext^1_{\mathcal{O}_{X_n}}
	(\Omega^1_{X_n/A_n}(\log D_n), \mathcal{O}_{X_n}) 
	\rightarrow  T^1((X_n/D_n)/A_n)
	\] 
	and check that they are converse to each other.  
	
	Given $\xi_n= (Y_n, E_n) \in T^1((X_n/D_n)/A_n)$,  
	 consider the exact sequence 
	\[
	0 \rightarrow \mathcal{O}_{X_n} \rightarrow 
	\Omega^1_{Y_n/A_n} (\log E_n)|_{X_n} \rightarrow 
	\Omega^1_{X_n/A_n} (\log D_n) \rightarrow 0 
	\]
	and let $\Phi(\xi_n) \in  \Ext^1_{\mathcal{O}_{X_n}}
	(\Omega^1_{X_n/A_n}(\log D_n), \mathcal{O}_{X_n})$ 
	be the class corresponding to the above extension. 
	This defines a homomorphism $\Phi$.  
	
	Conversely, given an extension  
	\[
	0 \rightarrow \mathcal{O}_{X_n} \xrightarrow{\beta} 
	\mathcal{F} \xrightarrow{\alpha} 
	\Omega^1_{X_n/A_n} (\log D_n) \rightarrow 0 
	\]
	corresponding to a class $\eta \in \Ext^1_{\mathcal{O}_{X_n}}
	(\Omega^1_{X_n/A_n}(\log D_n), \mathcal{O}_{X_n})$. 
	This can be pulled back by $d \colon \mathcal{O}_{X_n} 
	\rightarrow \Omega^1_{X_n/A_n}(\log D_n)$ to an extension 
	of $A_n$-algebras 
	\[
	0 \rightarrow \mathcal{O}_{X_n} \rightarrow 
	\mathcal{A} \rightarrow \mathcal{O}_{X_n} 
	\rightarrow 0, 
	\]
	where we put $\mathcal{A}:= \mathcal{F} \times_{\Omega^1_{X_n/A_n}(\log D_n)} 
	\mathcal{O}_{X_n}$. 
	On a open subset $U \subset X$, the sections of $\mathcal{A}$ is 
	\[
	\mathcal{A}(U) = \{ (e, g) \in \mathcal{F}(U) \times
	\mathcal{O}_{X_n}(U) \mid \alpha(e) = d g \}.
	\]
	The algebra structure on $\mathcal{A}$ is determined by 
	\[
	(e_1, g_1)\cdot (e_2, g_2) = (g_1e_2 + g_2 e_1, g_1 g_2)
	\]
	for $(e_1, g_1), (e_2, g_2) \in \mathcal{A}(U)$ on a open set $U$. 
	Moreover the $B_n$-algebra structure on $\mathcal{A}$ is determined by a homomorphism $B_n \rightarrow \mathcal{A}$ sending $y$ to $(\beta(1), 0) \in \mathcal{A}$, where $y \in B_n$ satisfies $y^2 =0$ in the definition.  
Then we see that $\mathcal{A}$ is flat over $B_n$ by the local criterion of flatness. 
We also see that $\mathcal{A} \otimes_{B_n} A_n \simeq \mathcal{O}_{X_n}$, 
thus, by putting $\mathcal{O}_{Y_n}:= \mathcal{A}$, we obtain a deformation of $X_n$ 
over $B_n$.  

We define an ideal sheaf $\mathcal{I} \subset \mathcal{A}$ by 
\[
\mathcal{I} := \mathcal{F}(-D_n) \times_{\Omega^1_{X_n/A_n}(\log D_n)(-D_n)} 
	\mathcal{O}_{X_n}(-D_n).  
\] 	
Note that we have a natural commutative diagram 
\[
\xymatrix{
\mathcal{O}_{X_n} \ar[r]^{d}  & \Omega^1_{X_n/A_n}(\log D_n)  \\
\mathcal{O}_{X_n}(-D_n) \ar[r]^{d \ \ \ \ \ \ } \ar@{^{(}->}[u]& \Omega^1_{X_n/A_n}(\log D_n)(-D_n).  
\ar@{^{(}->}[u]  
}
\]

We can check that $\mathcal{I}$ is an ideal sheaf of $\mathcal{A}$. 
Thus $\mathcal{I}$ has a $B_n$-module structure and   
it fits in a commutative diagram whose horizontal sequences are exact: 
\[\xymatrix{ 
0 \ar[r] & \mathcal{O}_{X_n}(-D_n) \ar[r]  &
	\mathcal{F}(-D_n)  \ar[r]  & 
	\Omega^1_{X_n/A_n}(\log D_n)(-D_n)  
	\ar[r]  & 0  \\
0 \ar[r] & \mathcal{O}_{X_n}(-D_n) \ar[r] \ar[u]^{=} &
	\mathcal{I}  \ar[r] \ar[u] & 
	\mathcal{O}_{X_n}(-D_n)  
	\ar[r] \ar[u]^{d} & 0. 
	}
\] 
Hence $\mathcal{I}$ is flat over $B_n$ again by the local criterion of flatness. 
We can also check that $\mathcal{I} \otimes_{B_n} A_n \simeq \mathcal{O}_{X_n}(-D_n)$. 
We can define a lifting $E_n \subset Y_n$ of $D_n \subset X_n$ by putting 
$\mathcal{I}_{E_n}:= \mathcal{I}$. 

By using $Y_n, E_n$, we can define $\Psi (\eta):= [(Y_n, E_n)] \in T^1((X_n, D_n)/A_n)$. 
Thus we obtain $\Psi$. 

We can check that $\Phi$ and $\Psi$ are converse to each other and obtain the required isomorphism. 
	\end{proof}

The following proposition shows that a ${\rm S}_3$ sheaf behaves well under base change and 
that infinitesimal deformations of a ${\rm S}_3$ scheme do not change by removing a codimension $3$ 
closed subset.

\begin{prop}\label{prop:S3flat} 
Let $X$ be an algebraic scheme over $\mathbb{C}$ and $\iota \colon U  \hookrightarrow X$ 
be an open immersion. 
Assume that $\depth \mathcal{O}_{X,p} \ge 3$ for all scheme theoretic point $p \in X \setminus U$. 
Let  $f_{U_A} \colon U_A \rightarrow \Spec A$ be a deformation of $U$ over $(A, \mathfrak{m}_A) \in \Art_{\mathbb{C}}$.  
Let $\mathcal{F}_{U_A}$ be an $A$-flat coherent sheaf on $U_A$.  
Let $\mathcal{F}_{A}:= \iota_*\mathcal{F}_{U_A}$ be a sheaf of $A$-modules on a ringed space $(X, \iota_* \mathcal{O}_{U_A})$ 
and $\mathcal{F}:= \iota_*( \mathcal{F}_{U_A} \otimes_A A/ \mathfrak{m}_A)$ be an $\mathcal{O}_X$-module. 
Assume that, on the stalk,  
\begin{equation}
\depth_{\mathcal{O}_{X,p}} \mathcal{F}_p \ge 3 \tag{*}
\end{equation}
for all scheme theoretic point $p \in X \setminus U$. 

 Let $M$ be a finite $A$-module. Then we have the following. 
 \begin{enumerate}
 \item [(i)]
 \[
R^1 \iota_* (\mathcal{F}_{U_A} \otimes_{A} M ) =0.  
\]
 \item[(ii)] 
 \[
 \mathcal{F}_A \otimes_{A} M \simeq \iota_* (\mathcal{F}_{U_A} \otimes_A M).
 \]
 \item[(iii)] The ringed space $X_A:= (X, \iota_* \mathcal{O}_{U_A})$ is an algebraic scheme, flat over $A$ and 
 $\mathcal{O}_{X_A} \otimes_A \mathbb{C} \simeq \mathcal{O}_X$. 
 Thus it defines $X_A \in \Def_X(A)$.  \\
 Moreover, the sheaf of $A$-modules $\mathcal{F}_A$ is a coherent $\mathcal{O}_{X_A}$-module, flat over $A$ and satisfies $\mathcal{F}_A \otimes_A \mathbb{C} \simeq \mathcal{F}$. 
 \end{enumerate} 
\end{prop}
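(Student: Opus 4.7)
The strategy is to reduce the problem to the classical depth--cohomology dictionary and run a double induction: first on the length of $M$ to establish (i), then on the length of $A$ to establish (ii) and (iii). The key input is that for any coherent $\mathcal{O}_X$-module $\mathcal{G}$ whose stalks at each $p \in Z := X \setminus U$ satisfy $\depth_{\mathcal{O}_{X,p}} \mathcal{G}_p \ge 3$, the local cohomology exact sequence yields $\iota_*(\mathcal{G}|_U) \simeq \mathcal{G}$ and $R^1 \iota_*(\mathcal{G}|_U) \simeq \mathcal{H}^2_Z(\mathcal{G}) = 0$. The hypothesis $(*)$ is tailored so that this dictionary applies directly to $\mathcal{F}$.

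For (i), I would take a composition series $0 = M_0 \subsetneq M_1 \subsetneq \cdots \subsetneq M_r = M$ of $A$-submodules with $M_i/M_{i-1} \simeq \mathbb{C}$, and induct on $i$. Flatness of $\mathcal{F}_{U_A}$ over $A$ produces short exact sequences
\[
0 \to \mathcal{F}_{U_A} \otimes_A M_{i-1} \to \mathcal{F}_{U_A} \otimes_A M_i \to \mathcal{F}|_U \to 0.
\]
Applying $\iota_*$, the inductive vanishing of $R^1 \iota_*(\mathcal{F}_{U_A} \otimes_A M_{i-1})$ together with $R^1 \iota_*(\mathcal{F}|_U) = 0$ forces $R^1 \iota_*(\mathcal{F}_{U_A} \otimes_A M_i) = 0$. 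In the same induction I would check that each pushforward $\iota_*(\mathcal{F}_{U_A} \otimes_A M_i)$ inherits depth $\ge 3$ along $Z$ --- this is preserved by extensions --- so that the dictionary continues to apply at the next step.

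For (ii) and (iii) I would induct on $\ell(A)$ by choosing a small extension $0 \to I \to A \to A' \to 0$ with $I \cdot \mathfrak{m}_A = 0$, so that $I$ is a $\mathbb{C}$-vector space. Setting $\mathcal{F}_{U_{A'}} := \mathcal{F}_{U_A} \otimes_A A'$, the inductive hypothesis supplies the scheme $X_{A'}$ with a flat coherent sheaf $\mathcal{F}_{A'}$. Tensoring the small extension with flat $\mathcal{F}_{U_A}$ and pushing forward via $\iota_*$ (using (i)) yields
\[
0 \to \mathcal{F} \otimes_{\mathbb{C}} I \to \mathcal{F}_A \to \mathcal{F}_{A'} \to 0,
\]
from which coherence of $\mathcal{F}_A$ is immediate. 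Flatness of $\mathcal{F}_A$ over $A$ then follows from the local criterion of flatness applied to the small extension, since the kernel naturally identifies with $I \otimes_A \mathcal{F}_A$. The isomorphism in (ii) is obtained by tensoring the above sequence with $M$, comparing against the $\iota_*$-pushforward of the analogous sequence on $U_A$, and invoking the five lemma together with the inductive case for $\mathcal{F}_{A'}$. Specializing to $\mathcal{F}_{U_A} = \mathcal{O}_{U_A}$ gives the scheme-theoretic assertions in (iii); in particular, local finite generation of $\iota_* \mathcal{O}_{U_A}$ follows by lifting $\mathbb{C}$-algebra generators of $\mathcal{O}_X$ on an affine chart to $A$-algebra generators via Nakayama over the Artinian ring $A$.

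The part I expect to be most delicate is the bookkeeping at each inductive step: verifying that the natural comparison map $\mathcal{F}_A \otimes_A M \to \iota_*(\mathcal{F}_{U_A} \otimes_A M)$ really is an isomorphism, without circular appeal to flatness of $\mathcal{F}_A$. This requires sequencing the two inductions so that (i), the depth propagation, coherence, flatness, and finally (ii) are each in hand at the moment they are invoked.
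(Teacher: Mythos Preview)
Your outline is essentially the paper's proof: (i) by induction on the length of $M$ via flatness of $\mathcal{F}_{U_A}$ and the depth hypothesis, then (ii) and (iii) by induction along small extensions using (i) and the local criterion of flatness. The one substantive difference is the order in which you handle flatness and the base-change isomorphism (ii), and here your proposal has a gap that the paper's ordering avoids.

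You write that flatness of $\mathcal{F}_A$ follows from the local criterion ``since the kernel naturally identifies with $I \otimes_A \mathcal{F}_A$.'' But that identification is $I \otimes_{\mathbb{C}} \mathcal{F} \simeq I \otimes_{\mathbb{C}} (\mathcal{F}_A \otimes_A \mathbb{C})$, i.e.\ it presupposes $\mathcal{F}_A \otimes_A \mathbb{C} \simeq \mathcal{F}$, which is exactly (ii) for $M = \mathbb{C}$. Likewise, the hypothesis ``$\mathcal{F}_A \otimes_A A'$ flat over $A'$'' in the local criterion requires $\mathcal{F}_A \otimes_A A' \simeq \mathcal{F}_{A'}$, which is (ii) for $M = A'$. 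So flatness cannot come before at least these special cases of (ii). The paper therefore proves (ii) first, for $M$ a quotient of $A$, by a direct diagram chase that uses only (i): in the comparison diagram with rows
\[
\mathcal{F}_A \otimes_A (t) \to \mathcal{F}_A \to \mathcal{F}_A \otimes_A A' \to 0
\quad\text{and}\quad
0 \to \iota_*\bigl(\mathcal{F}_{U_A}\otimes_A (t)\bigr) \to \iota_*\mathcal{F}_{U_A} \to \iota_*\bigl(\mathcal{F}_{U_A}\otimes_A A'\bigr) \to 0,
\]
the bottom row is exact by (i), and (i) again (applied with $M = \mathfrak{m}_A$) makes the outer vertical maps surjective; a chase then gives injectivity of the right-hand vertical, hence the isomorphism. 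Only after this does the paper invoke the local criterion to get flatness, and then (ii) for general $M$ by induction on $\dim_{\mathbb{C}} M$. Your acknowledged ``delicate bookkeeping'' is precisely this point; once you insert the direct argument for (ii) with $M$ a quotient of $A$ before appealing to flatness, your proof and the paper's coincide.
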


\begin{proof}
\noindent(i)  We prove this by induction on $\dim_{\mathbb{C}} M$. 

If $\dim_{\mathbb{C}} M = 1$, then we have $M \simeq \mathbb{C}$. 
Thus we have $R^1 \iota_* (\mathcal{F}_{U_A} \otimes_{A} \mathbb{C}) =0$ 
by the condition (*) on the depth. 

We consider the general case. 
Note that there exists $m \in M$ such that $A \cdot m \simeq \mathbb{C}$. 
Thus we have an exact sequence 
\begin{equation}\label{eq:exactmodule}
0 \rightarrow \mathbb{C} \rightarrow M \rightarrow M' \rightarrow 0.  
\end{equation}
By tensoring this with $\mathcal{F}_{U_A}$, and taking $R^1 \iota_*$, 
we obtain an exact sequence 
\[
 R^1 \iota_* ( \mathcal{F}_{U_A} \otimes_A \mathbb{C}) 
\rightarrow R^1 \iota_* ( \mathcal{F}_{U_A} \otimes_A M )
\rightarrow R^1 \iota_* ( \mathcal{F}_{U_A} \otimes_A M' ).   
\]
By the induction hypothesis, the both sides are zero. 
Hence we see that the middle term is also zero. 

\vspace{2mm}

\noindent(ii) 
The identity homomorphism $\iota^* (\mathcal{F}_A \otimes_A M) \rightarrow \mathcal{F}_{U_A} \otimes_A M$ 
defines a homomorphism \[
\phi \colon \mathcal{F}_A \otimes_A M \rightarrow \iota_* (\mathcal{F}_{U_A} \otimes_A M). 
\]


We shall show that $\phi$ is an isomorphism.  

\noindent({\bf Case 1}) We first treat the case where $M$ admits a surjection $A \rightarrow M$ and let $A':= M$.  
We may assume that $\pi \colon A \rightarrow A'$ is a small extension, that is, $\dim_{\mathbb{C}} \Ker \pi =1$ 
since we can decompose $\pi$ into small extensions. 
Let $(t):= \Ker \pi$ with an exact sequence 
\begin{equation}\label{eq:smallext}
0 \rightarrow (t) \rightarrow A \rightarrow A' \rightarrow 0. 
\end{equation}
By tensoring this with $\mathcal{F}_A$ and $\mathcal{F}_{U_A}$, and taking $\iota_*$, 
 we obtain a commutative diagram 
\begin{equation}\label{eq:twoexact}
\xymatrix{ 
& \mathcal{F}_A \otimes_A (t) \ar[r] \ar[d]^{\phi'} & 
 \mathcal{F}_A \ar[r] \ar[d]^{\simeq} & 
 \mathcal{F}_A \otimes_A A' \ar[r] \ar[d]^{\phi} & 
 0 \\
0 \ar[r] &  \iota_* (\mathcal{F}_{U_A} \otimes_A (t)) \ar[r]  & 
  \iota_* ( \mathcal{F}_{U_A} )
\ar[r]  & 
  \iota_* ( \mathcal{F}_{U_A} \otimes_A A' ) 
\ar[r]  & 0.    
}
\end{equation}
Both horizontal sequences are exact. 
The exactness of the bottom sequence follows from 
 the flatness of $\mathcal{F}_{U_A}$ and $R^1 \iota_* \mathcal{F}_U =0$ by (i). 
 The vertical homomorphisms $\phi$ and $\phi'$ are surjective. 
 Indeed, the surjectivity of $\phi'$ follows from (i) again since we have $(t) \simeq \mathbb{C} \simeq A/ \mathfrak{m}_A$ 
 for the maximal ideal $\mathfrak{m}_A$. 
 Hence, by the diagram chasing, we see that $\phi$ is injective. 
 Thus we see that $\phi$ is an isomorphism. 
 
\vspace{2mm} 
 
\noindent({\bf Case 2}) In general case, we show that $\phi$ is an isomorphism by induction on $\dim_{\mathbb{C}}M$. 
 When $\dim_{\mathbb{C}} M =1$, then the statement is proved in (Case 1). 
  
  For $m \in M$ such that $A \cdot m \subset M$ is $1$-dimensional over $\mathbb{C}$, we have an 
 exact sequence as in (\ref{eq:exactmodule}). 
Thus we have the following two exact sequences with commutative diagrams:  
 \begin{equation}\label{eq:twoexactmodules}
\xymatrix{ 
& \mathcal{F}_A \otimes_A \mathbb{C} \ar[r] \ar[d]^{\phi_0} & 
 \mathcal{F}_A \otimes_A M \ar[r] \ar[d]^{\phi} & 
 \mathcal{F}_A \otimes_A M' \ar[r] \ar[d]^{\phi_{M'}} & 
 0 \\
0 \ar[r] &  \iota_* (\mathcal{F}_{U_A} \otimes_A \mathbb{C}) \ar[r]  & 
  \iota_* ( \mathcal{F}_{U_A} \otimes_A M)
\ar[r]  & 
  \iota_* ( \mathcal{F}_{U_A} \otimes_A M' ) 
\ar[r]  & 0.    
}
\end{equation}
The induction hypothesis implies that $\phi_{M'}$ is an isomorphism. 
Since $\phi_0$ is also an isomorphism, we see that $\phi$ is also an isomorphism by the diagram.  
  
 \vspace{2mm}
 
 \noindent(iii) 
For the former statement, we may assume that $X$ is affine and shall show that 
$\iota_* \mathcal{O}_{U_A}$ is isomorphic to the sheaf $\mathcal{O}_{ \Spec H^0( \mathcal{O}_{U_A})}$. 
We prove this by induction on $\dim_{\mathbb{C}} A$. 

Consider a small extension as in (\ref{eq:smallext}). 
By $R^1 \iota_* \mathcal{O}_U=0$, we have an exact sequence 
\[
0 \rightarrow \iota_* \mathcal{O}_U \rightarrow \iota_* \mathcal{O}_{U_A} 
\rightarrow \iota_*( \mathcal{O}_{U_A} \otimes_A A') \rightarrow 0.
\]
 The both side terms are isomorphic to the sheaf coming from the corresponding affine schemes. 
 Hence the middle one is also isomorphic to the sheaf of an affine scheme. 
 Thus we see that the ringed space $X_A =(X, \iota_* \mathcal{O}_{U_A})$ is an algebraic scheme. 
 We may also check that $\mathcal{F}_A$ is coherent by the exact sequence (\ref{eq:twoexact}). 
 
 Next we shall check the $A$-flatness of $\mathcal{O}_{X_A}$ and $\mathcal{F}_A$. 
 By (ii), we see that $\phi$ and $\phi'$ are isomorphisms in the diagram (\ref{eq:twoexact}). 
 By induction on $\dim_{\mathbb{C}}A$, 
  we shall show the $A$-flatness of $\mathcal{F}_A$. 
  
 When $\dim_{\mathbb{C}}A =1$, then the flatness is trivial. 
 Let $A \rightarrow A'$ be a small extension as in (\ref{eq:smallext}). 
 We see that $\mathcal{F}_{A'} \simeq \mathcal{F}_A \otimes_A A'$ is flat over $A'$ 
 by the induction hypothesis.  
 By the local criterion of flatness (cf. \cite[Proposition 2.2]{MR2583634}) and 
 the exact sequence in (\ref{eq:twoexact}), we see that 
 $\mathcal{F}_A$ is flat over $A$. 
 
In the same way, we can check that $\mathcal{O}_{X_A}$ is flat over $A$.  
\end{proof}

\begin{rem}
A similar proposition was treated in \cite[Proposition 12]{MR1339664}. 
However the original statement has a counterexample (\cite[Example 3.12]{Lee:2016aa}). 
Lee and Nakayama also treated similar propositions (cf. \cite[Proposition 3.7, Corollary 3.10, Lemma 3.14.]{Lee:2016aa}) 
which should imply Proposition \ref{prop:S3flat}. 
However, we include the proof of Proposition \ref{prop:S3flat} to make this paper self-contained.   
\end{rem}

\vspace{2mm} 

As a corollary, we have the following proposition about the behaviour of the deformation functor 
of a klt variety under removing a codimension $3$ closed subscheme.

\begin{cor}\label{cor:kltrestrictisom}
Let $X$ be a normal variety with only klt singularities and 
$D$ be an effective $\mathbb{Q}$-Cartier Weil divisor on $X$. 
Let $Z \subset X$ be a closed subscheme whose irreducible components $Z_i$ satisfy $\codim_X Z_i \ge 3$. 
Let $U:= X \setminus Z$ and $D_U:= D \setminus Z$. 
Consider the restriction morphism of functors 
\[
\Phi \colon \Def_{(X,D)} \rightarrow \Def_{(U, D_U)}. 
\]
Then we have the following. 
\begin{enumerate}
\item[(i)]  There is an inverse $\Psi \colon \Def_{(U,D_U)} \rightarrow \Def_{(X,D)}$ of $\Phi$. 
Hence $\Phi$ is an isomorphism. 
\item[(ii)] Let $(X_n, D_n) \in \Def_{(X,D)}(A_n)$ over $A_n$ as in Definition \ref{defn:T1space} 
and its restriction $(U_n, D_{U_n}) \in \Def_{(U, D_U)}(A_n)$ by $\Phi$. 
Then we have an isomorphism of the $T^1$-spaces 
\[
T^1((X_n, D_n)/A_n) \simeq T^1((U_n, D_{U_n})/A_n). 
\] 
\end{enumerate}
\end{cor}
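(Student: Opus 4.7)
The approach is to apply Proposition \ref{prop:S3flat} twice---once with $\mathcal{F} = \mathcal{O}_X$, and once with the ideal sheaf $\mathcal{F} = \mathcal{I}_D = \mathcal{O}_X(-D)$---in order to pass between $U$ and $X$. First I verify the depth hypothesis $(*)$ in both cases at every $p \in Z$. Since $X$ has only klt singularities, it is Cohen--Macaulay, so $\depth \mathcal{O}_{X,p} \geq 3$. For the ideal sheaf, the $\mathbb{Q}$-Cartier hypothesis on $D$ allows the construction of the local index one cover along $D$; this cover is again klt (hence CM), and $\mathcal{O}_X(-D)$ appears as a direct summand of its pushforward, whence it inherits the depth bound.

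To prove (i), given $(U_n, D_{U_n}) \in \Def_{(U, D_U)}(A_n)$, I would apply Proposition \ref{prop:S3flat}(iii) with $\mathcal{F}_{U_A} = \mathcal{O}_{U_n}$ to obtain an $A_n$-flat algebraic scheme $X_n := (X, \iota_* \mathcal{O}_{U_n}) \in \Def_X(A_n)$, and then with $\mathcal{F}_{U_A} = \mathcal{I}_{D_{U_n}}$ (which is itself $A_n$-flat, since both $\mathcal{O}_{U_n}$ and $\mathcal{O}_{D_{U_n}}$ are) to obtain an $A_n$-flat coherent ideal sheaf $\mathcal{I}_n \subset \mathcal{O}_{X_n}$, defining a flat closed subscheme $D_n \subset X_n$ with special fibre $D$. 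Setting $\Psi([(U_n, D_{U_n})]) := [(X_n, D_n)]$ gives a well-defined natural transformation. The identity $\Phi \circ \Psi = \mathrm{id}$ is immediate from restriction to $U$, while $\Psi \circ \Phi = \mathrm{id}$ follows by induction on $\dim_{\mathbb{C}} A_n$ along small extensions, using Proposition \ref{prop:S3flat}(ii) to identify $\iota_*(\mathcal{O}_{X_n}|_U) \simeq \mathcal{O}_{X_n}$ and $\iota_*(\mathcal{I}_{D_n}|_U) \simeq \mathcal{I}_{D_n}$.

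For (ii), I would apply the same construction in the relative setting over $A_n$. By definition, $T^1((X_n, D_n)/A_n)$ parameterizes $B_n$-deformations of $(X_n, D_n)$ compatible with the marking to $A_n$, and similarly for $(U_n, D_{U_n})$, where $B_n = A_n \otimes_{\mathbb{C}} A_1$. Since $X_n$ is $A_n$-flat with Cohen--Macaulay special fibre, it remains $S_3$ along $Z_n := Z \times_X X_n$, and $\mathcal{I}_{D_n}$ inherits the depth condition by the same index one cover argument, so the pushforward/restriction construction of (i) produces mutually inverse bijections between the two $T^1$-spaces. The main obstacle is verifying the depth bound for $\mathcal{O}_X(-D)$ at codimension three points of $Z$; this is precisely where both hypotheses are essential, since without klt the index one cover need not be CM, and without $\mathbb{Q}$-Cartier the cover cannot even be formed.
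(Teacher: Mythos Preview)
Your proposal is correct and follows essentially the same route as the paper: push forward $\mathcal{O}_{U_A}$ and $\mathcal{I}_{D_{U_A}}$ via Proposition~\ref{prop:S3flat}, using the Cohen--Macaulay property of $\mathcal{O}_X(-D)$ (the paper cites \cite[Corollary 5.25]{MR1658959}, which is exactly your index-one-cover argument). The only real difference is in (ii): the paper simply notes that it follows immediately from (i), since $T^1((X_n,D_n)/A_n)$ is defined entirely in terms of $\Def_{(X,D)}(B_n)$ together with the base-change map to $\Def_{(X,D)}(A_n)$, so an isomorphism of functors automatically yields an isomorphism of $T^1$-spaces---there is no need to re-verify depth conditions over $A_n$.
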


\begin{proof}
(i) Let $(U_A, D_{U_A}) \in \Def_{(U, D_U)}(A)$ over $A \in \Art_{\mathbb{C}}$. 
We shall construct $\Psi((U_A,D_{U_A}))=(X_A, D_A)$ as follows. 

Let $\mathcal{O}_{X_A}:= \iota_* \mathcal{O}_{U_A}$. 
By Proposition \ref{prop:S3flat}(iii), we see that the ringed space 
$X_A:=(X, \mathcal{O}_{X_A})$ is an algebraic scheme such that $X_A \in \Def_X(A)$. 

Next define an ideal sheaf $\mathcal{I}_{D_A} \subset \mathcal{O}_{X_A}$ by  
$\mathcal{I}_{D_A}:= \iota_* \mathcal{I}_{D_{U_A}} $. 
Since $\mathcal{I}_D= \mathcal{O}_X(-D)$ is Cohen-Macaulay (cf. \cite[Corollary 5.25]{MR1658959}), 
the sheaf $\mathcal{I}_{D_A}$ satisfies the condition of $\mathcal{F}_A$ in Proposition \ref{prop:S3flat}. 
Thus $\mathcal{I}_{D_A}$ is flat over $A$ and the corresponding closed subscheme $D_A \subset X_A$ 
defines an element $(X_A, D_A) \in \Def_{(X,D)}(A)$.  

For a homomorphism $A \rightarrow A'$ in $\Art_{\mathbb{C}}$, 
 the compatibility 
 \[
 \Psi ((U_A, D_{U_A}) \otimes_A A') = \Psi ((U_A, D_{U_A})) \otimes_{A} A'
 \]
  follows from 
 Proposition \ref{prop:S3flat} (ii). 
  Hence we have a natural transformation $\Psi \colon \Def_{(U,D_U)} \rightarrow \Def_{(X,D)}$. 
  
  We can check that $\Phi$ and $\Psi$ are converse to each other by Proposition \ref{prop:S3flat}. 
  
 \noindent(ii) This immediately follows from (i). 
\end{proof}

For a deformation of a $3$-fold with only terminal singularities, we also have the following lemma about $\Ext$ group under restriction to a open subset 
over $A_n$.  

\begin{lem}\label{lem:Extopenisom}(cf. \cite[Lemma 2.11]{MR3653082}) 
	Let $X$ be a $3$-fold with only terminal singularities and $\xi_n:= (X_n \rightarrow \Spec A_n) \in \Def_X(A_n)$. Let $U \subset X$ be the smooth locus of $X$ and 
	$U_n \rightarrow \Spec A_n$ be a deformation of $U$ induced by $\xi_n$. 
	Let $\mathcal{F}, \mathcal{L}$ be reflexive sheaves on $X_n$. Assume that $R^1 \iota_* \mathcal{L}|_{U_n} =0$, where $\iota \colon U \hookrightarrow X$ is an open immersion. 
	
	Then the restriction homomorphism 
	\begin{equation}
	r \colon \Ext^1_{\mathcal{O}_{X_n}} (\mathcal{F}, \mathcal{L}) 
	\rightarrow \Ext^1_{\mathcal{O}_{U_n}} (\mathcal{F}|_{U_n}, \mathcal{L}|_{U_n}) 
	\end{equation}
	is an isomorphism. 
	\end{lem}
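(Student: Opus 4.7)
The plan is to interpret the restriction map of $\Ext$ groups via a long exact sequence involving $\Ext$ with supports in the singular locus, and then to show those supported $\Ext$ groups vanish in degrees $1$ and $2$ using depth estimates enabled by the hypothesis.

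First, since $X$ has only terminal singularities, the set $Z := X \setminus U = \Sing X$ is a finite collection of closed points; topologically $X_n = X$ and $U_n = U$, and $\iota \colon U_n \hookrightarrow X_n$ is the same map as $\iota \colon U \hookrightarrow X$. I would invoke the long exact sequence
\[
\cdots \to \Ext^i_Z(\mathcal{F},\mathcal{L}) \to \Ext^i_{\mathcal{O}_{X_n}}(\mathcal{F},\mathcal{L}) \to \Ext^i_{\mathcal{O}_{U_n}}(\mathcal{F}|_{U_n},\mathcal{L}|_{U_n}) \to \Ext^{i+1}_Z(\mathcal{F},\mathcal{L}) \to \cdots,
\]
which reduces the lemma to showing $\Ext^1_Z(\mathcal{F},\mathcal{L}) = \Ext^2_Z(\mathcal{F},\mathcal{L}) = 0$. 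These groups are controlled by the spectral sequence
\[
E_2^{p,q} = H^p_Z(\mathcal{E}xt^q_{\mathcal{O}_{X_n}}(\mathcal{F},\mathcal{L})) \Rightarrow \Ext^{p+q}_Z(\mathcal{F},\mathcal{L}),
\]
so it suffices to kill every $E_2^{p,q}$ with $p+q \leq 2$.

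Next, I would induct on $n$. For the base case $n = 0$, reflexivity of $\mathcal{H}om(\mathcal{F},\mathcal{L})$ on the normal threefold $X$ gives $\depth \geq 2$ at each point of $Z$, killing $H^0_Z$ and $H^1_Z$ of $\mathcal{H}om(\mathcal{F},\mathcal{L})$. The hypothesis $R^1 \iota_* \mathcal{L}|_U = 0$ is equivalent to $H^2_z(\mathcal{L}) = 0$ at each $z \in Z$, via the local cohomology sequence on the punctured spectrum $\Spec \mathcal{O}_{X,z} \setminus \{z\}$. Choosing a local presentation $\mathcal{O}_X^{\oplus a} \to \mathcal{O}_X^{\oplus b} \to \mathcal{F} \to 0$ and taking $\mathcal{H}om(-,\mathcal{L})$ yields an embedding $\mathcal{H}om(\mathcal{F},\mathcal{L}) \hookrightarrow \mathcal{L}^{\oplus b}$ whose cokernel injects into $\mathcal{L}^{\oplus a}$, and this propagates the depth estimate to give $H^2_z(\mathcal{H}om(\mathcal{F},\mathcal{L})) = 0$. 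The sheaves $\mathcal{E}xt^q(\mathcal{F},\mathcal{L})$ for $q \geq 1$ are supported on the non–locally-free locus of $\mathcal{F}$, which has codimension $\geq 2$; only $H^0_Z$ and $H^1_Z$ of these enter the range $p+q \leq 2$, and a direct depth argument at the isolated terminal points settles them. For the inductive step $n \geq 1$, a small extension $0 \to (t) \to A_n \to A_{n-1} \to 0$ produces
\[
0 \to \mathcal{L}|_0 \to \mathcal{L} \to \mathcal{L} \otimes_{A_n} A_{n-1} \to 0
\]
on $X_n$ and on $U_n$; applying $\Hom(\mathcal{F},-)$ and the five lemma combines the induction hypothesis at level $n-1$ with the base case to yield the isomorphism at level $n$.

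The main technical obstacle is in the base case, namely transferring the depth $\geq 3$ estimate from $\mathcal{L}$ to $\mathcal{H}om(\mathcal{F},\mathcal{L})$ despite the fact that $\mathcal{F}$ need not have finite projective dimension at a terminal singular point. The local presentation argument above is the cleanest workaround, bypassing the need for a finite free resolution and relying only on the reflexivity of $\mathcal{F}$ and $\mathcal{L}$ together with the left-exactness of $\mathcal{H}om(-,\mathcal{L})$.
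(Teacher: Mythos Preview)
Your approach via $\Ext$ groups with support and the local-to-global spectral sequence is genuinely different from the paper's, and it has a real gap. The paper's proof is a two-line argument: interpret $\Ext^1$ as extension classes and build an explicit inverse to $r$ by pushing forward along $\iota$. Given an extension $0 \to \mathcal{L}|_{U_n} \to \mathcal{G}_{U_n} \to \mathcal{F}|_{U_n} \to 0$ on $U_n$, the sequence $0 \to \iota_*(\mathcal{L}|_{U_n}) \to \iota_*\mathcal{G}_{U_n} \to \iota_*(\mathcal{F}|_{U_n}) \to R^1\iota_*(\mathcal{L}|_{U_n})$ is exact; reflexivity gives $\iota_*(\mathcal{L}|_{U_n})=\mathcal{L}$ and $\iota_*(\mathcal{F}|_{U_n})=\mathcal{F}$, and the hypothesis kills the $R^1$ term. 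Since an extension of reflexive sheaves is reflexive, $\iota_*\iota^*$ is the identity on such extensions, so this really is an inverse.

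The gap in your argument is the claim that ``a direct depth argument at the isolated terminal points settles'' the terms $E_2^{0,1}=H^0_Z(\mathcal{E}xt^1(\mathcal{F},\mathcal{L}))$ and $E_2^{0,2}=H^0_Z(\mathcal{E}xt^2(\mathcal{F},\mathcal{L}))$. Depth estimates control vanishing of $H^i_Z$ of a sheaf for $i$ below the depth, but they say nothing about $H^0_Z$ of a sheaf whose support already sits inside $Z$. If $\mathcal{F}$ is reflexive but not locally free at some $p\in Z$ (for instance $\mathcal{F}=\omega_X$ at a non-Gorenstein terminal point), the stalk $\mathcal{E}xt^q(\mathcal{F},\mathcal{L})_p$ can be a nonzero finite-length module, and then $H^0_Z(\mathcal{E}xt^q)$ is exactly that stalk. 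The same issue feeds back into your argument for $H^2_Z(\mathcal{H}om(\mathcal{F},\mathcal{L}))=0$: tracing your presentation, the relevant cokernel contains $\mathcal{E}xt^1(\mathcal{F},\mathcal{L})$ as a subsheaf, so its $H^0_Z$ need not vanish. In fact your reduction to $\Ext^2_Z(\mathcal{F},\mathcal{L})=0$ is stronger than what the lemma asserts (which only needs surjectivity of $r$, not injectivity of $\Ext^2_{X_n}\to\Ext^2_{U_n}$), and there is no reason to expect it to hold for arbitrary reflexive $\mathcal{F}$. The push-forward construction sidesteps all of this by never needing to analyse the local $\mathcal{E}xt$ sheaves at the singular points.
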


\begin{proof}
	We can construct the converse of $r$ as follows. Given an exact sequence
	\[
	0 \rightarrow \mathcal{L}|_{U_n} \rightarrow \mathcal{G}_{U_n} \rightarrow 
	\mathcal{F}|_{U_n} \rightarrow 0, 
	\]
	its push-forward 
	\[
	0 \rightarrow \mathcal{L} \rightarrow \iota_* \mathcal{G}_{U_n} 
	\rightarrow \mathcal{F} \rightarrow 0 
	\]
	is also exact by the condition $R^1 \iota_* \mathcal{L}|_{U_n} =0$. 
	
	\end{proof}

We also need the following proposition about the obstruction to smoothness of the forgetful morphism. 

\begin{prop}\label{prop:forgetfulsmooth}
Let $X$ be an algebraic scheme and $D$ be its closed subscheme. 
Let $\Def_{(X,D)}$ and $\Def_X$ be the deformation functors defined in Definitions \ref{defn:defsch}, \ref{defn:pairdefsch} 
and 
\[
\Phi \colon \Def_{(X, D)} \rightarrow \Def_X
\] be the forgetful morphism. 
Assume the following two conditions:  
\begin{enumerate}
\item[(i)] There is a Zariski open cover $\{U_i \}_{i=1}^n$ such that 
the forgetful morphism $\Def_{(U_i, D_i)} \rightarrow \Def_{U_i}$ is a smooth morphism for $i=1, \ldots, n$, 
where $D_i:= D \cap U_i$. 
\item[(ii)] $H^1(D, \mathcal{N}_{D/X})=0$, where $\mathcal{N}_{D/X}$ is the normal sheaf. 
\end{enumerate}  
Then $\Phi$ is a smooth morphism. 
\end{prop}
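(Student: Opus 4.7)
The plan is to verify the infinitesimal lifting criterion for smoothness of the morphism of functors $\Phi$. Fix a small extension
\[
0 \rightarrow (t) \rightarrow A \rightarrow A' \rightarrow 0
\]
in $\Art_{\mathbb{C}}$, an element $(X_{A'}, D_{A'}) \in \Def_{(X,D)}(A')$, and a lift $X_A \in \Def_X(A)$ of $X_{A'}$. The task is to construct a closed immersion $D_A \hookrightarrow X_A$ flat over $A$ whose restriction to $X_{A'}$ is $D_{A'}$; this will exhibit the required preimage under $\Phi$.

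First I would use hypothesis (i) to produce local lifts. For each $i$, set $U_{i,A} := X_A \times_X U_i$, $U_{i,A'} := X_{A'} \times_X U_i$, and $D_{i,A'} := D_{A'} \cap U_{i,A'}$. The smoothness of $\Def_{(U_i, D_i)} \rightarrow \Def_{U_i}$ provides a flat lift $D_{i,A} \hookrightarrow U_{i,A}$ whose restriction to $U_{i,A'}$ is $D_{i,A'}$.

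Next I would measure the failure of gluing. On each double overlap $U_{ij,A} := U_{i,A} \cap U_{j,A}$, the two closed subschemes $D_{i,A}|_{U_{ij,A}}$ and $D_{j,A}|_{U_{ij,A}}$ of $U_{ij,A}$ are flat over $A$ and agree modulo $(t)$. The standard torsor structure on lifts of a closed immersion under a small extension (cf.\ \cite[Proposition 3.2.1]{MR2247603}) identifies their difference with a section $\alpha_{ij} \in \Gamma(U_{ij} \cap D, \mathcal{N}_{D/X})$, using $(t) \simeq \mathbb{C}$. A routine verification on triple overlaps gives the cocycle relation $\alpha_{ij} + \alpha_{jk} = \alpha_{ik}$, so $\{\alpha_{ij}\}$ defines a \v{C}ech $1$-cocycle on the cover $\{U_i \cap D\}$ of $D$ with values in $\mathcal{N}_{D/X}$.

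Finally, hypothesis (ii) kills the obstruction. The natural injection $\check{H}^1(\{U_i \cap D\}, \mathcal{N}_{D/X}) \hookrightarrow H^1(D, \mathcal{N}_{D/X}) = 0$ forces $\{\alpha_{ij}\}$ to be a coboundary, say $\alpha_{ij} = \beta_j - \beta_i$ with $\beta_i \in \Gamma(U_i \cap D, \mathcal{N}_{D/X})$. Modifying each $D_{i,A}$ by the torsor action of $-\beta_i$ yields new local lifts that agree on overlaps, and these glue to the desired global closed immersion $D_A \hookrightarrow X_A$. The principal subtlety is the clean identification of the difference of two local liftings of $D_{A'}$ with a section of $\mathcal{N}_{D/X}$ together with the accompanying cocycle relation; once this is in place, the cohomological step and the gluing are formal.
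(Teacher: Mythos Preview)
Your argument is correct and follows essentially the same approach as the paper. The paper's proof is more compressed: it invokes \cite[Theorem 6.2 (b)]{MR2583634} directly to produce an obstruction class $o_{(X_A, D_A), X_{A'}} \in H^1(D, \mathcal{N}_{D/X} \otimes_{\mathbb{C}} J)$ for any small extension with kernel $J$, whereas you unpack that construction explicitly as a \v{C}ech $1$-cocycle of differences of local lifts for the case $J \simeq \mathbb{C}$; the content is the same.
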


\begin{rem}
The condition (i) holds when $D \subset X$ is a l.c.i. subscheme (cf.  \cite[Remark 6.2.1]{MR2583634}). 
\end{rem}

\begin{proof}
This can be deduced from \cite[Theorem 6.2 (b)]{MR2583634} as follows. 
Given an extension $0 \rightarrow J \rightarrow A' \rightarrow A \rightarrow 0$ in $\Art_{\mathbb{C}}$ 
such that $\mathfrak{m}_{A'} \cdot J=0$, $(X_A, D_A) \in \Def_{(X,D)}(A)$ and 
$X_{A'} \in \Def_{X}(A')$ such that $X_{A'} \otimes_{A'} A \simeq X_A$. 
Since extensions of $D_A$ over $A'$ exist locally on $X_A$ by the condition (i), 
we can apply \cite[Theorem 6.2 (b)]{MR2583634} to define an obstruction class $o_{(X_A, D_A), X_{A'}} \in H^1(D, \mathcal{N}_{D/X} \otimes_{\mathbb{C}} J)$ 
for the existence of a lifting $(X_{A'}, D_{A'}) \in \Def_{(X,D)}(A')$ of $(X_A,D_A)$.  
Thus $H^1(D, \mathcal{N}_{D/X})$ is an obstruction space for smoothness of $\Phi$ 
and, by the condition (ii), the smoothness of $\Phi$ follows. 
\end{proof}

\vspace{2mm}

\subsection{Proof of Theorem \ref{thm:wqfano3intro}}

The following result is crucial to prove Theorem \ref{thm:wqfano3intro}.

\begin{thm}\label{thm:pairunobs}
	Let $X$ be a weak $\mathbb{Q}$-Fano $3$-fold. 
	Take a smooth member  $ D \in |{-}mK_X|$ for some positive integer $m$ such that 
	$D \cap \Sing X = \emptyset$ which exists by the base point free theorem. 
	
	Then the deformation functor $\Def_{(X,D)}$ is unobstructed. 
	\end{thm}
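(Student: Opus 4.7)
My approach is to verify the $T^1$-lifting criterion for $\Def_{(X,D)}$: it is enough to show that for every $n\ge 1$ and every $(X_n,D_n)\in\Def_{(X,D)}(A_n)$, the restriction map
\[
T^1\bigl((X_n,D_n)/A_n\bigr) \;\longrightarrow\; T^1\bigl((X_{n-1},D_{n-1})/A_{n-1}\bigr)
\]
is surjective. By Proposition~\ref{prop:T1Ext} each $T^1$-space is an $\Ext^1$-group and the restriction is the natural base-change morphism, so the question is purely cohomological.

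Since $X$ is a $3$-fold with only terminal singularities, $Z:=\Sing X$ is a finite set of points and hence has codimension $3$ in $X$. Combined with the hypothesis $D\cap Z=\emptyset$, this lets me apply Corollary~\ref{cor:kltrestrictisom}(ii) to replace $X_n$ by its smooth restriction $U_n:=X_n\setminus Z$ without changing the $T^1$-space. On $U_n$ the sheaf $\Omega^1_{U_n/A_n}(\log D_{U_n})$ is locally free, so, writing $T_{U_n/A_n}(-\log D_{U_n})$ for its dual,
\[
T^1\bigl((X_n,D_n)/A_n\bigr) \;\simeq\; H^1\bigl(U_n,\,T_{U_n/A_n}(-\log D_{U_n})\bigr).
\]
The flat base-change short exact sequence
\[
0 \to T_U(-\log D) \to T_{U_n/A_n}(-\log D_{U_n}) \to T_{U_{n-1}/A_{n-1}}(-\log D_{U_{n-1}}) \to 0,
\]
coming from $\ker(A_n\twoheadrightarrow A_{n-1})\simeq\mathbb{C}$, then reduces the required surjectivity, via the long exact cohomology sequence, to the single vanishing $H^2(U,\,T_U(-\log D))=0$.

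The main obstacle is this last vanishing. Using the canonical identification $T_U(-\log D)\cong\Omega^2_U(\log D)\otimes\omega_U(D)^{-1}$ together with $D\sim -mK_X$, the group in question becomes $H^2$ of $\Omega^2_U(\log D)$ twisted by $\omega_U^{\,m-1}$, whose reflexive extension to $X$ corresponds to the $\mathbb{Q}$-Cartier divisor $(1-m)K_X$; this is anti-nef and, for $m\ge 2$, anti-big, which is exactly the positivity input required by Kawamata--Viehweg/Akizuki--Nakano-type vanishing for logarithmic differentials. The subtle point is that $T_X$ at a terminal $3$-fold point is generally only $S_2$, so the reflexive extension of $T_U(-\log D)$ has non-vanishing $R^1\iota_\ast$ and one cannot pass from $U$-cohomology to $X$-cohomology cleanly via Leray. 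Following the hint that invokes \cite[Theorem~1]{MR1286924}, I would instead pull back to a log resolution $\pi\colon\widetilde X\to X$ which is an isomorphism over $U$, apply a smooth log vanishing theorem on $(\widetilde X,E+\widetilde D)$ (with $E=\pi^{-1}(Z)$ simple normal crossings and $\widetilde D$ the strict transform of $D$) twisted by an appropriate pull-back of $\mathcal{O}_X((1-m)K_X)$, and descend the resulting vanishing to $U$ using that $\pi|_U=\id_U$ and that terminal singularities are rational. Executing this descent so as to recover exactly $H^2(U,T_U(-\log D))=0$, without leftover contributions from $E$, is the step I expect to be the most delicate.
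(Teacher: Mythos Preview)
Your reduction to the single vanishing $H^2(U,\,T_U(-\log D))=0$ is formally correct, but this is exactly where the argument breaks down. That vanishing is nothing other than the vanishing of the obstruction space for $\Def_{(U,D_U)}\simeq\Def_{(X,D)}$, which is a strictly stronger statement than unobstructedness. The entire point of invoking the $T^1$-lifting criterion is that it establishes smoothness of the functor \emph{without} requiring the obstruction group to vanish---and for weak (as opposed to genuine) $\mathbb{Q}$-Fano $3$-folds this distinction is essential. Your proposed route via log Akizuki--Nakano does not go through: Nakano-type vanishing for $\Omega^p$ with $p\ge 1$ requires \emph{ampleness} of the twisting line bundle, and it is classically known to fail when the bundle is merely nef and big (Ramanujam). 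Since $-K_X$ is only nef and big here, neither the direct vanishing on $X$ nor any descent from a log resolution of the form you sketch can be expected to produce $H^2(U,T_U(-\log D))=0$.

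The paper's proof takes a genuinely different path that never attempts to kill the obstruction space. After identifying the $T^1$-space with an $\Ext^1$ on $X_n$ and applying relative Serre duality, the problem becomes the base-change behaviour of $H^2\!\bigl(X_n,\iota_*(\Omega^1_{U_n/A_n}(\log D_n)\otimes\omega_{U_n/A_n})\bigr)$. One then passes to the cyclic cover $\pi_n\colon Z_n\to X_n$ branched along $D_n$; the relevant module becomes a direct summand of $H^2\!\bigl(Z_n,\Omega^1_{Z_n/A_n}(\log\Delta_n)(-\Delta_n)\bigr)$, and the $T^1$-lifting property is reduced to showing that this $H^2$ is \emph{free} over $A_n$ and restricts surjectively to $A_{n-1}$. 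Surjectivity follows from the vanishing $H^3(Z,\Omega^1_Z(\log\Delta)(-\Delta))=0$, which \emph{does} reduce to ordinary Kawamata--Viehweg vanishing (this is Lemma~\ref{lem:H3vanish}). Freeness, however, requires additionally that the $H^1$'s also surject, and this is obtained not by a vanishing theorem but by a Hodge-theoretic argument in the style of \cite{MR1286924}: one shows that $H^1(Z,\Omega^1_Z(\log\Delta)(-\Delta))$ is generated by images of $d\log$ of line bundles, whence the surjectivity lifts from that of Picard groups. This is the step that truly replaces the unavailable nef-and-big Nakano vanishing, and your proposal contains no analogue of it.
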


\begin{proof}
	We shall use the $T^1$-lifting property. The proof is similar to \cite[Theorem 2.2]{MR3264677}, 
	but also follows some arguments in \cite[Section 2]{MR1286924}.

	By the $T^1$-lifting theorem (\cite[Theorem A]{MR1658200}) and Proposition \ref{prop:T1Ext}, it is enough to show that the base-change  homomorphism 
	\[
	\Ext^1_{\mathcal{O}_{X_n}}(\Omega^1_{X_n/A_n}(\log D_n), \mathcal{O}_{X_n}) \rightarrow \Ext^1_{\mathcal{O}_{X_{n-1}}}(\Omega^1_{X_{n-1}/A_{n-1}}(\log D_{n-1}), \mathcal{O}_{X_{n-1}})
	\]
	is surjective. 
	
	Let $U$ be the smooth locus of $X$ and $\iota \colon U \hookrightarrow X$ 
	be the open immersion. Note that $D \subset U$. 
	Let $(U_n,D_n) \in \Def_{(U,D)}(A_n)$ be the deformation induced by $(X_n, D_n)$.  
	 We have two isomorphisms   
	\begin{multline}
	\Ext^1_{\mathcal{O}_{X_n}}(\Omega^1_{X_n/A_n}(\log D_n), \mathcal{O}_{X_n}) \simeq 
	\Ext^1_{\mathcal{O}_{U_n}}(\Omega^1_{U_n/A_n}(\log D_n), \mathcal{O}_{U_n}) \\
	\simeq 
	\Ext^1_{\mathcal{O}_{X_n}} \left( \iota_* \left( \Omega^1_{U_n/A_n}(\log D_n) \otimes \omega_{U_n/A_n} \right), \omega_{X_n/A_n} \right),  
	\end{multline}
	where $\iota \colon U_n \hookrightarrow X_n$ is an open immersion. 
	The former isomorphism follows from Corollary \ref{cor:kltrestrictisom} (ii). 
	The latter isomorphism follows from Lemma \ref{lem:Extopenisom} since $\omega_X$ is Cohen-Macaulay and
	 we have $R^1 \iota_* \omega_{U_n/ A_n} =0$ 
	by applying Proposition \ref{prop:S3flat}(i) in the case $M=A= A_n$  (cf. \cite[Claim 2.12]{MR3419958}). 
	By the Serre duality, we obtain 
	\begin{multline}
	\Ext^1_{\mathcal{O}_{X_n}}(\iota_* \left( \Omega^1_{U_n/A_n}(\log D_n) \otimes \omega_{U_n/A_n} \right), \omega_{X_n/A_n}) \\
	 \simeq  \Hom_{A_n} \left(	H^2 \left( X_n, \iota_* \left( \Omega^1_{U_n/A_n}(\log D_n) \otimes \omega_{U_n/A_n} \right) \right), A_n \right).  
	\end{multline}
	
	Thus it is enough to show the homomorphism 
	\begin{multline}
	\Hom_{A_n} (	H^2(X_n, \iota_* \left( \Omega^1_{U_n/A_n}(\log D_n) \otimes \omega_{U_n/A_n} \right)), A_n) \\
	\rightarrow \Hom_{A_{n-1}} \left(	H^2 \left( X_{n-1}, \iota_* \left( \Omega^1_{U_{n-1}/A_{n-1}}(\log D_{n-1}) \otimes \omega_{U_{n-1}/A_{n-1}} \right) \right), A_{n-1} \right)
	\end{multline}
is surjective. 

Let $\pi_n \colon Z_n \rightarrow X_n$ be a cyclic cover branched along $D_n \in |{-}m K_{X_n/A_n}|$. 
Now let $M^{\vee}$ be the dual of  some object $M$.  
We have an isomorphism 
\[
\Omega^1_{Z_n}(\log \Delta_n) \simeq (\pi_n^* \Omega^1_{X_n}(\log D_n))^{\vee \vee}
\]
for some divisor 
 $\Delta_n \in |{-}\pi_n^* K_{X_n / A_n}|$, 
 where $\pi_n^* K_{X_n/ A_n}$ is a Cartier divisor on $Z_n$ 
 corresponding to a line bundle $(\pi_n^* \omega_{X_n/A_n})^{\vee \vee}$. 
Then we obtain the decomposition  
\[
(\pi_n)_* \Omega^1_{Z_n}(\log \Delta_n)(-\Delta_n) \simeq \bigoplus_{i=0}^{m-1} \iota_* (\Omega^1_{U_n}(\log D_n)((i+1) K_{U_n/A_n})). 
\]
By this decomposition, the surjectivity is reduced to that of the homomorphism 

\begin{multline}
	\Hom_{A_n} (	H^2(Z_n, \Omega^1_{Z_n/A_n}(\log \Delta_n)(-\Delta_n)), A_n) \\
	\rightarrow \Hom_{A_{n-1}} (	H^2(Z_{n-1},  \Omega^1_{Z_{n-1}/A_{n-1}}(\log \Delta_{n-1}) (-\Delta_{n-1}) ), A_{n-1}). 
\end{multline}

As in the proof of \cite[Section 2, Theorem 1]{MR1286924}, the above surjectivity is reduced to the following two statements; 
\begin{enumerate}
	\item[(i)]  $H^2(Z_n,  \Omega^1_{Z_n/A_n}(\log \Delta_n)(- \Delta_n))$ is a free $A_n$-module. 
	\item[(ii)] The reduction homomorphism \[
	H^2(Z_n, \Omega^1_{Z_n/A_n}(\log \Delta_n)(-\Delta_n)) \rightarrow 
	H^2(Z_{n-1}, \Omega^1_{Z_{n-1}/A_{n-1}}(\log \Delta_{n-1})(-\Delta_{n-1}))
	\] 
	is surjective. 
	\end{enumerate}

Since $Z_n \rightarrow \Spec A_n$ is a relative l.c.i.\ morphism, we see that $\Omega^1_{Z_n/A_n}(\log \Delta_n)(- \Delta_n)$ is a flat $A_n$-module (cf. \cite[Theorem D.2.7]{MR2247603}). 
Hence we have an exact sequence 
\[
0 \rightarrow \Omega^1_{Z}(\log \Delta)(- \Delta) \rightarrow \Omega^1_{Z_n/A_n}(\log \Delta_n)(- \Delta_n) 
\rightarrow \Omega^1_{Z_{n-1}/A_{n-1}}(\log \Delta_{n-1})(- \Delta_{n-1}) \rightarrow 0. 
\]
The following lemma implies (ii). 

\begin{lem}\label{lem:H3vanish}
	Let $Z:= Z_0$, $\Delta:= \Delta_0$ and $\pi:= \pi_0 \colon Z \rightarrow X$ be as above. Then we have 
	\[
	H^3(Z, \Omega^1_{Z}(\log \Delta)(-\Delta)) =0. 
	\]
	\end{lem}

\begin{proof}[Proof of Lemma \ref{lem:H3vanish}]
Since $Z$ is smooth along $\Delta$, there is an exact sequence 
\[
0 \rightarrow \Omega^1_Z(\log \Delta)(- \Delta) \rightarrow 
\Omega^1_Z \rightarrow \Omega^1_{\Delta} \rightarrow 0. 
\]
This sequence induces an exact sequence 
\[
H^2(\Delta, \Omega^1_{\Delta}) \rightarrow H^3(Z, \Omega^1_Z(\log \Delta)(- \Delta)) 
\rightarrow H^3(Z, \Omega^1_Z). 
\]
Thus it is enough to show that the both side terms are zero. 
	
	We first show $H^2(\Delta, \Omega^1_{\Delta}) =0$. 
	By the Serre duality and Hodge symmetry, 
	we see that $h^2(\Delta, \Omega^1_{\Delta}) = h^0(\Delta, \Omega^1_{\Delta}) = h^1(\Delta, \mathcal{O}_{\Delta})$. 
	There is an exact sequence 
	\[
	H^1(Z, \mathcal{O}_Z) \rightarrow H^1(\Delta, \mathcal{O}_{\Delta}) \rightarrow H^2(Z, \mathcal{O}_Z(- \Delta)).  
	\]
	Since we have $\pi_* \mathcal{O}_Z \simeq \bigoplus_{i=0}^{m-1} \mathcal{O}_X(i K_X)$, we see that 
	$H^1(Z, \mathcal{O}_Z) =0$ by the Kawamata-Viehweg vanishing theorem since $-K_X$ is nef and big. 
Since $\Delta$ is nef and big, we also see that $H^2(Z, \mathcal{O}_Z(- \Delta)) =0$. Thus we obtain $H^2(\Delta, \Omega^1_{\Delta}) =0$. 

Next we show $H^3(Z, \Omega^1_Z) =0$. By the Serre duality, we see that $h^3(Z, \Omega^1_Z) = h^0(Z, \Omega^2_Z)$. 
Let $\mu \colon \tilde{Z} \rightarrow Z$ be a log resolution of singularities of $Z$. 
Since $Z$ has only Gorenstein terminal singularities, we see that $\mu_* \Omega^2_{\tilde{Z}} \simeq \Omega^2_Z$ (cf. \cite[Theorem 4]{MR1863856}). 
	 Hence we obtain $h^0(Z, \Omega^2_Z) = h^0(\tilde{Z}, \Omega^2_{\tilde{Z}}) = h^2(\tilde{Z}, \mathcal{O}_{\tilde{Z}})$, 
	 where we use the Hodge symmetry on $\tilde{Z}$ for the latter equality. 
	 Since $Z$ has only rational singularities, $h^2(\tilde{Z}, \mathcal{O}_{\tilde{Z}}) = h^2(Z, \mathcal{O}_Z)$. 
	 This is zero by the Kawamata-Viehweg vanishing theorem. Thus we obtain $H^3(Z, \Omega^1_Z) =0$. Concluding the proof of Lemma \ref{lem:H3vanish}. 
	\end{proof}

Thus we obtain (ii). 

In order to obtain (i), it is enough to show the surjectivity of the homomorphism 
\[
\Phi_n \colon	H^1(Z_n, \Omega^1_{Z_n/A_n}(\log \Delta_n)(-\Delta_n)) \rightarrow 
	H^1(Z_{n-1}, \Omega^1_{Z_{n-1}/A_{n-1}}(\log \Delta_{n-1})(-\Delta_{n-1}))
\]
by (ii) and the base change theorem (cf. \cite[Theorem 12.11 (b)]{Hartshorne}). 

Let $\mathcal{K}_{(Z_n, \Delta_n)}:= \Ker (\mathcal{O}_{Z_n}^* \rightarrow \mathcal{O}_{\Delta_n}^*)$. 
Then we have a commutative diagram 
\[
\xymatrix{
0 \ar[r] & H^1(Z, \mathcal{K}_{(Z,\Delta)}) \otimes_{\mathbb{Z}} \mathbb{C} \ar[r] \ar[d]^{\alpha_1} & 
 H^1(Z, \mathcal{O}_Z^*) \otimes_{\mathbb{Z}} \mathbb{C}   \ar[r] \ar[d]^{\alpha_2} & 
 H^1(\Delta, \mathcal{O}_{\Delta}^*) \otimes_{\mathbb{Z}} \mathbb{C} \ar[d]^{\alpha_3}  
  \\ 
0 \ar[r] & H^1(Z, \Omega^1_Z(\log \Delta)(- \Delta)) \ar[r] & 
H^1(Z, \Omega^1_Z) \ar[r] & H^1(\Delta, \Omega^1_{\Delta}),  
}
\]
where the vertical homomorphisms are induced by the homomorphisms 
\[
\frac{1}{2 \pi \sqrt{-1}} d\log_Z \colon \mathcal{O}_Z^* \rightarrow \Omega^1_Z, \ \ 
\frac{1}{2 \pi \sqrt{-1}} d \log_{\Delta} \colon \mathcal{O}_{\Delta}^* \rightarrow \Omega^1_{\Delta}.
\]  
Note that we have the upper exact sequence after tensoring $\mathbb{C}$ since $H^1(Z, \mathcal{O}_Z) =0$ by the proof of Lemma \ref{lem:H3vanish} and thus $H^1(Z, \mathcal{O}_{Z}^*)$ is a finitely generated 
$\mathbb{Z}$-module.    
By using arguments in \cite[Lemma 2.2]{MR1286924}, we obtain the following claim. 
\begin{claim} 
The homomorphism $\alpha_2$ is surjective. 
\end{claim}
 
 \begin{proof}[Proof of Claim] 
Note that $Z$ has only isolated cDV singularities by the construction of $Z$. 
Let $\nu \colon \tilde{Z} \rightarrow Z$ be a log resolution of $Z$ such that 
$\nu^{-1}(Z^{\rm sm}) \rightarrow Z^{\rm sm}$ is an isomorphism. 
We have a natural homomorphism $\Omega^1_Z \rightarrow \nu_* \Omega^1_{\tilde{Z}}$. 
This is an isomorphism since we have $H^i_{\Sing Z} (Z, \Omega^1_{Z}) =0$ 
for $i=0, 1$ by \cite[Lemma 2.1]{MR1286924}. 
The Leray spectral sequence induces a commutative diagram 
\begin{equation}\label{eq:diagramsnake}
\xymatrix{
0 \ar[r] & H^1(Z, \mathcal{O}_Z^*)  \otimes_{\mathbb{Z}} \mathbb{C} \ar[r] \ar[d]^{\alpha_2} & 
 H^1(\tilde{Z}, \mathcal{O}_{\tilde{Z}}^*) \otimes_{\mathbb{Z}} \mathbb{C}   \ar[r] \ar[d]^{\tilde{\alpha}_2} & 
 H^0(Z,  R^1 \nu_* \mathcal{O}_{\tilde{Z}}^*) \otimes_{\mathbb{Z}} \mathbb{C} \ar[d]^{\alpha'_2}  
  \\ 
0 \ar[r] & H^1(Z, \Omega^1_Z) \ar[r] & 
H^1(\tilde{Z}, \Omega^1_{\tilde{Z}}) \ar[r] & H^1(Z, R^1 \nu_*\Omega^1_{\tilde{Z}})  
}
\end{equation}
whose horizontal sequences are exact. 
The exactness of the upper sequence follows since 
$H^1(\tilde{Z}, \mathcal{O}_{\tilde{Z}})=0$ and $H^1(\tilde{Z}, \mathcal{O}^*_{\tilde{Z}})$ is a finitely generated $\mathbb{Z}$-module.
 We can also show that $\alpha'_2$ is injective 
 by \cite[Lemma 2.2, Claim]{MR1286924} since the problem is local around 
 the singularities of $Z$. By applying the snake lemma to the diagram (\ref{eq:diagramsnake}), we obtain the surjectivity of $\tilde{\alpha}_2$. 
 \end{proof}
 
Since $H^1(\Delta, \mathcal{O}_{\Delta})=0$, the homomorphism $\alpha_3$ is injective. 
Thus, by the snake lemma, we see that $\alpha_1$ is surjective. 

Let $\mathcal{K}':= \Ker (\mathcal{K}_{(Z_n, \Delta_n)} \rightarrow \mathcal{K}_{(Z_{n-1}, \Delta_{n-1})})$. 
We see that $\mathcal{K}' \simeq \mathcal{O}_Z(- \Delta)$ since we have a commutative diagram 
\[
\xymatrix{ 
& 0 \ar[d] & 0 \ar[d] & 0 \ar[d] & \\
0 \ar[r] & \mathcal{K}' \ar[r] \ar[d] & 
\mathcal{K}_{(Z_n, \Delta_n)} \ar[r] \ar[d] & 
\mathcal{K}_{(Z_{n-1}, \Delta_{n-1})} \ar[r] \ar[d] & 
0 \\ 
0 \ar[r] & \mathcal{O}_Z \ar[r] \ar[d] & 
\mathcal{O}_{Z_n}^* \ar[r] \ar[d] & 
\mathcal{O}_{Z_{n-1}}^* \ar[r] \ar[d] & 
0 \\ 
0 \ar[r] & \mathcal{O}_{\Delta} \ar[r] \ar[d] & 
\mathcal{O}_{\Delta_n}^* \ar[r] \ar[d] & 
\mathcal{O}_{\Delta_{n-1}}^* \ar[r] \ar[d] & 
0.  \\  
 & 0 & 0 & 0 &  
}
\] 
By this and $H^2(Z, \mathcal{O}_Z(- \Delta)) =0$, we see that the homomorphism 
\[
\beta_1 \colon H^1(Z_n, \mathcal{K}_{(Z_n, \Delta_n)}) \rightarrow H^1(Z, \mathcal{K}_{(Z, \Delta)})
\] 
is surjective. 
Since we have a commutative diagram 
\[
\xymatrix{
H^1(Z_n, \mathcal{K}_{(Z_n, \Delta_n)}) \otimes_{\mathbb{Z}} A_n \ar[r]^{\beta_1} \ar[d] & 
 H^1(Z, \mathcal{K}_{(Z, \Delta)}) \otimes_{\mathbb{Z}} \mathbb{C} \ar[d]^{\alpha_1} \\
H^1(Z_n, \Omega^1_{Z_n/A_n}(\log \Delta_n)(-\Delta_n)) \ar[r]^{\beta_2} & 
H^1(Z, \Omega^1_Z( \log \Delta)(- \Delta)),   
}
\]
we see that the homomorphism $\beta_2$ is surjective. 
We see that $\beta_2 \otimes_{A_n} \mathbb{C}$ is an isomorphism by the base change theorem (\cite[Theorem 12.11 (a)]{Hartshorne}). 
Thus, by Nakayama's lemma, 
we see that $H^1(Z_n, \mathcal{K}_{(Z_n, \Delta_n)}) \otimes_{\mathbb{Z}} A_n  \rightarrow H^1(Z_n, \Omega^1_{Z_n/A_n}(\log \Delta_n)(-\Delta_n))$ 
is surjective. By this and the commutative diagram 
\[
\xymatrix{
H^1(Z_n, \mathcal{K}_{(Z_n, \Delta_n)}) \otimes_{\mathbb{Z}} A_n \ar[r]^{\beta'_1 \ \ \ \ } \ar[d] & 
 H^1(Z_{n-1}, \mathcal{K}_{(Z_{n-1}, \Delta_{n-1})}) \otimes_{\mathbb{Z}} A_{n-1} \ar[d] \\
H^1(Z_n, \Omega^1_{Z_n/A_n}(\log \Delta_n)(-\Delta_n)) \ar[r]^{\Phi_n \ \ \ \ } & 
H^1(Z_{n-1}, \Omega^1_{Z_{n-1}}( \log \Delta_{n-1})(- \Delta_{n-1})),   
}
\]
we see that $\Phi_n$ is surjective. 
Hence we obtain (i). 
We finish the proof of Theorem \ref{thm:pairunobs}.  
	\end{proof}

\begin{cor}\label{cor: wqfano3unobs}
	Let $X$ be a weak $\mathbb{Q}$-Fano $3$-fold. Then its deformation functor $\Def_X$ is unobstructed. 
	\end{cor}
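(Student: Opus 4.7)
The plan is to deduce the corollary from Theorem \ref{thm:pairunobs} by showing that the forgetful morphism $\Phi \colon \Def_{(X,D)} \to \Def_X$ is smooth, where $D \in |{-}mK_X|$ is the smooth divisor supplied by Theorem \ref{thm:pairunobs} (with $m$ a sufficiently divisible positive integer so that $D$ is Cartier, smooth, and disjoint from $\Sing X$, via the base point free theorem and a Bertini argument). Once smoothness of $\Phi$ is established, the unobstructedness of $\Def_{(X,D)}$ proved in Theorem \ref{thm:pairunobs} will transfer to $\Def_X$.

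To show smoothness of $\Phi$, I would verify the two hypotheses of Proposition \ref{prop:forgetfulsmooth}. Hypothesis (i) holds since $D$ lies in the smooth locus of $X$ and is therefore a local complete intersection, by the remark following that proposition. For hypothesis (ii), since $\mathcal{O}_X(D) \simeq \mathcal{O}_X({-}mK_X)$ is a line bundle and $D$ is smooth, $\mathcal{N}_{D/X} \simeq \mathcal{O}_X({-}mK_X)|_D$, and the short exact sequence
\[
0 \to \mathcal{O}_X \to \mathcal{O}_X(D) \to \mathcal{N}_{D/X} \to 0
\]
yields
\[
H^1(X, \mathcal{O}_X({-}mK_X)) \to H^1(D, \mathcal{N}_{D/X}) \to H^2(X, \mathcal{O}_X).
\]
Both outer groups vanish by the Kawamata--Viehweg vanishing theorem on $X$ (whose terminal singularities are in particular klt): indeed $-(m+1)K_X$ and $-K_X$ are both nef and big. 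This gives the required vanishing of $H^1(D, \mathcal{N}_{D/X})$.

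Finally, smoothness of $\Phi$ combined with the unobstructedness of $\Def_{(X,D)}$ implies unobstructedness of $\Def_X$ by a standard induction on the length of $A \in \Art_{\mathbb{C}}$. Starting from the tautological $(X,D) \in \Def_{(X,D)}(\mathbb{C})$, smoothness of $\Phi$ inductively shows that every $X_A \in \Def_X(A)$ lies in the image of some $\xi_A \in \Def_{(X,D)}(A)$; then the smoothness of $\Def_{(X,D)}$ produces a lift $\xi_{A'} \in \Def_{(X,D)}(A')$ along any small extension $A' \twoheadrightarrow A$, whose image $\Phi(\xi_{A'}) \in \Def_X(A')$ lifts $X_A$. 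The substantive work has already been carried out in Theorem \ref{thm:pairunobs}; the remaining steps are the two routine vanishings above and a formal deformation-theoretic manipulation, so no step here should present a genuine obstacle.
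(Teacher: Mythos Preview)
Your proposal is correct and follows essentially the same route as the paper: choose the smooth $D \in |{-}mK_X|$ from Theorem \ref{thm:pairunobs}, verify the hypotheses of Proposition \ref{prop:forgetfulsmooth} (hypothesis (i) via $D$ being l.c.i.\ in the smooth locus, hypothesis (ii) via the short exact sequence and Kawamata--Viehweg vanishing), and conclude that the smooth forgetful morphism transfers unobstructedness from $\Def_{(X,D)}$ to $\Def_X$. The paper's proof is slightly terser but identical in content.
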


\begin{proof} 
	Let $D \in |{-}m K_X|$ be a smooth divisor as in Theorem \ref{thm:pairunobs}. 
	Let $F: \Def_{(X,D)} \rightarrow \Def_X$ be the forgetful morphism. 
	Note that $H^1(D, \mathcal{N}_{D/X}) =0$ since we have an exact sequence 
	\[
	 H^1(X, \mathcal{O}_X(D)) \rightarrow H^1(D, \mathcal{N}_{D/X}) \rightarrow H^2(X, \mathcal{O}_X) 
	\]
	and both side terms are zero by the Kawamata-Viehweg vanishing theorem. Thus, by Proposition \ref{prop:forgetfulsmooth} and Theorem \ref{thm:pairunobs}, we see that $F$ is a smooth morphism and 
	this implies that $\Def_X$ is unobstructed 
	since $\Def_{(X,D)}$ is unobstructed. 
	\end{proof}

\section{$\mathbb{Q}$-smoothings of a weak $\mathbb{Q}$-Fano $3$-fold}

\subsection{Du Bois invariants of singularities}
We shall introduce two types of Du Bois invariants of a $3$-fold terminal singularity. 

\vspace{2mm}

First, we recall the invariant used in \cite[Section 1]{MR1358982} for the Gorenstein case and \cite{MR3692020} in the general case. 
Let $(U,p)$ be a germ of a $3$-fold terminal singularity and 
\[
\pi \colon V:= \Spec \oplus_{i=0}^{r-1} \mathcal{O}_U (i K_U) \rightarrow U
\]
 be its index one cover with the $\mathbb{Z}_r$-action with a point $q:= \pi^{-1}(p)$. 
Let $\nu \colon \tilde{V} \rightarrow V$ be a $\mathbb{Z}_r$-equivariant log resolution such that its exceptional divisor
 $F \subset \tilde{V}$ has a SNC support and $\tilde{V} \setminus F \simeq V \setminus \{q \}$.  
Let $V':= V \setminus \{q \}$ and 
\[
\tau_V \colon H^1(V', \Omega^2_{V'}(-K_{V'})) \rightarrow H^2_{F}(\tilde{V}, \Omega^2_{\tilde{V}}(-\nu^*K_V))
\] be the coboundary map of the local cohomology. 
Let $\tilde{\pi} \colon \tilde{V} \rightarrow \tilde{U}:= \tilde{V}/\mathbb{Z}_r$ be the finite morphism induced by $\pi$ and $E \subset \tilde{U}$  
the exceptional divisor of the birational morphism $\mu \colon \tilde{U} \rightarrow U$ induced by $\nu$. 
Let $U':= U \setminus \{ p \}$ and  $\mathcal{F}_U^{(0)}$ the $\mathbb{Z}_r$-invariant part of $\tilde{\pi}_* \Omega^2_{\tilde{V}}( -\nu^* K_V)$. 
Then we have the coboundary map 
\begin{equation}\label{phiUdescription}
\phi_U \colon H^1(U', \Omega^2_{U'}(-K_{U'})) \rightarrow H^2_{E}(\tilde{U}, \mathcal{F}_U^{(0)}) 
\end{equation}
as the $\mathbb{Z}_r$-invariant part of $\tau_V$. 
We have the following result. 

\begin{fact}\label{fact:phiU0}(\cite[Corollary 2.7]{MR3692020}) Let $(U,p)$ be a germ of a $3$-fold terminal singularity. Then the following are equivalent. 
\begin{enumerate}
\item[(i)] $\phi_U =0$. 
\item[(ii)] $(U, p)$ is a quotient singularity or an $A_{1,2}/4$-singularity. 
 \end{enumerate}
 Here, an $A_{1,2}/4$-singularity is the germ $(x^2+y^2 +z^3 +u^2=0)/ \mathbb{Z}_4(1,3,2,1)$. 
\end{fact}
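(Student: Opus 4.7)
The plan is to reduce the computation of $\phi_U$ to an equivariant analysis on the index one cover $V$, and then argue by cases using the Mori--Reid classification of 3-fold terminal singularities. Since $K_V$ is Cartier and trivial in a neighbourhood of $q$, the source of $\phi_U$ is the $\mathbb{Z}_r$-invariant part of $H^1(V', \Omega^2_{V'})$ and the target is the invariant part of $H^2_F(\tilde V, \Omega^2_{\tilde V})$. Everything therefore reduces to understanding the equivariant coboundary map $\tau_V$ of the local-cohomology long exact sequence for $F \subset \tilde V$.

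For the direction (ii)$\Rightarrow$(i), if $(U,p)$ is a cyclic quotient singularity, then $V$ is smooth at $q$. Taking $\tilde V=V$, the target $H^2_{\{q\}}(V,\Omega^2_V)$ vanishes by a depth argument (the sheaf $\Omega^2_V$ is locally free and hence of depth $3$), so $\tau_V=0$ and therefore $\phi_U=0$. For the $A_{1,2}/4$ singularity, the index-one cover $V=\{x^2+y^2+z^3+u^2=0\}$ is a $cA_2$ hypersurface and $\tau_V\neq0$, but one must show that $\Image(\tau_V)$ lies entirely in the non-invariant eigenspaces of the $\mathbb{Z}_4$-action with weights $(1,3,2,1)$. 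I would do this by identifying $\Image(\tau_V)$ with an explicit graded piece of the Milnor algebra $\mathbb{C}[x,y,z,u]/(\partial f)$ via Grothendieck duality, then reading off the $\mathbb{Z}_4$-weights of its generators and checking that no class has weight $\equiv 0 \bmod 4$.

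For the reverse direction (i)$\Rightarrow$(ii), I would run through each family in Reid's list of non-quotient terminal singularities (types $cA$, $cA/r$ with $r\ge 2$, $cAx/4$, $cAx/2$, $cD/2$, $cD/3$, $cE/2$, and the isolated exceptional cases), in each case computing the weights of generators of $\Image(\tau_V)$ in the associated Milnor algebra and exhibiting an invariant monomial of the correct socle degree that witnesses nonvanishing of $\phi_U$. A single uniform weight computation cannot succeed, because the $A_{1,2}/4$ singularity shows that the invariant part really can accidentally vanish; the families must be distinguished one at a time.

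The main obstacle is precisely the case-by-case verification in (i)$\Rightarrow$(ii). The $A_{1,2}/4$ exception arises from a numerical coincidence between the socle degree of the Milnor algebra and the group weights, so the argument cannot be unified: one has to verify by hand that the analogous coincidence fails in every other class of Reid's classification. A systematic tool that makes each case tractable is Watanabe's criterion for invariants of a cyclic Gorenstein action, which converts the $\mathbb{Z}_r$-weight question into an integer congruence that can be checked family by family.
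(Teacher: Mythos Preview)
This statement is labeled a \emph{Fact} in the paper and is not proved there: it is quoted verbatim from \cite[Corollary 2.7]{MR3692020} and used as a black box in the proof of Theorem~\ref{thm:QsmQfactIndex1}(ii). There is therefore no proof in the present paper to compare your proposal against.

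That said, a few comments on your sketch. Your direction (ii)$\Rightarrow$(i) for the quotient case is slightly misformulated: taking $\tilde V=V$ does not satisfy the setup, since the resolution is required to have $\tilde V\setminus F\simeq V\setminus\{q\}$ with $F$ a divisor. The cleaner argument is on the \emph{source}: when $V$ is smooth, $\Omega^2_{V'}$ extends to a locally free sheaf on $V$, so $H^1(V',\Omega^2_{V'})\simeq H^2_{\{q\}}(V,\Omega^2_V)=0$ by depth, whence $\tau_V=0$ and $\phi_U=0$. Equivalently, one may note $H^1(U',\Omega^2_{U'}(-K_{U'}))\simeq T^1_{(U,p)}=0$ for a quotient singularity. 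Your plan for the $A_{1,2}/4$ case and for (i)$\Rightarrow$(ii) --- identifying $\Image(\tau_V)$ with weighted pieces of the Jacobian ring and running through the Mori--Reid list --- is the natural strategy and is essentially how the cited reference proceeds, but you should be aware that this is a genuine case analysis carried out in \cite{MR3692020}, not something the present paper reproduces.
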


We also use another invariant of a terminal singularity. 

\begin{defn}\label{defn:DBinvterminalb11}
	Let $\tilde{\pi} \colon \tilde{V} \rightarrow \tilde{U}$ and $F \subset \tilde{V}$ 
	be as above.  Let 
	\[
	\tilde{\pi}_* \Omega^1_{\tilde{V}}(\log F)(-F) = \bigoplus_{i=0}^{r-1} \mathcal{G}_U^{(i)}
	\]
	be the eigen-decomposition with respect to the $\mathbb{Z}_r$-action of $\pi$ such that 
	\[
	\mathcal{G}_{U}^{(i)}:= \{ s \in \tilde{\pi}_* \Omega^1_{\tilde{V}}(\log F)(-F) \mid 
	g \cdot s = \zeta_r^i s \}
	\]
	for $\zeta_r:= \exp(2 \pi \sqrt{-1}/r)$ and the generator $g:= \bar{1} \in \mathbb{Z}_r$. 
	Let 
	\[
	\mu^{(i)}(U,p):= \dim_{\mathbb{C}} H^0(\tilde{U}, \mathcal{G}_U^{(i)}). 
	\]
	From now, for a positive integer $m$ and 
	a $\mathbb{C}$-vector space $V$ (or a sheaf) with a $\mathbb{Z}_m$-action, 
	let 
	\[
	V^{(i)}:= \{ v \in V \mid g \cdot v = \zeta_m^i v \}
	\]
	be the eigenspace with eigenvalue $\zeta_r^i$ for $i=0, \ldots, m-1$.  
	\end{defn}
	
	\begin{rem}
	When $(U,p)$ is a $3$-fold Gorenstein terminal singularity, the invariant $\mu(U,p):=\mu^{(0)}(U,p)$ 
	is used to study smoothability of a Calabi-Yau $3$-fold with terminal singularities (\cite{MR1358982}). It is proved that $\mu(U,p) =0$ only when $(U,p)$ is either smooth 
	or an ODP (\cite[Theorem 2.2]{MR1358982}). 
\end{rem}

We use the invariant $\mu^{(1)}(U, p)$ to study $\mathbb{Q}$-smoothing of a weak $\mathbb{Q}$-Fano $3$-fold. 
The following can be useful to compute the invariant $\mu^{(1)}$. 

\begin{lem}\label{lem:mu-1ineq}
Let $(U,p)$ be a $3$-fold terminal singularity such that $\mu^{(1)}(U,p) =0$. 
Then we have 
\[
\mu^{(-1)}(U,p) \ge \dim T^1_{(U,p)} - \sigma^{(1)}(U,p), 
\]
where $\sigma^{(1)}(U,p)$ is the rank of the eigenspace 
$\Cl (V,q)^{(1)}$ of the local divisor class group $\Cl (V,q)$ of the germ $(V,q)$.  
\end{lem}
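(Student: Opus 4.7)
The plan is to relate $\mu^{(\pm 1)}(U,p)$, $T^1_{(U,p)}$, and $\sigma^{(1)}(U,p)$ through an equivariant Hodge-theoretic argument on the index-one cover $\tilde V \to V$, and to use the hypothesis $\mu^{(1)}(U,p) = 0$ to close a long exact sequence into the desired inequality.

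First I would translate every quantity to the log resolution $\tilde V \to V$. By the definition of $\mathcal{G}_U^{(i)}$ we have
\[
\mu^{(i)}(U,p) = \dim_{\mathbb C} H^0\!\bigl(\tilde V, \Omega^1_{\tilde V}(\log F)(-F)\bigr)^{(i)}.
\]
Since $\pi \colon V \to U$ is Galois with group $\mathbb Z_r$ and \'etale in codimension one, a standard descent argument gives $T^1_{(U,p)} \cong (T^1_{(V,q)})^{(0)}$, so the quantity to bound is the $\mathbb Z_r$-invariant part of the $T^1$ of the Gorenstein cover. Similarly, $\sigma^{(1)}(U,p)$ is by definition the dimension of the $(1)$-eigenspace of $H^1(\tilde V, \mathcal O_{\tilde V}^*) \otimes_{\mathbb Z} \mathbb C$, under the standard identification of the local class group of a rational singularity with the Picard group of an equivariant resolution.

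Next I would combine the log-differential sequence
\[
0 \to \Omega^1_{\tilde V}(\log F)(-F) \to \Omega^1_{\tilde V} \to \Omega^1_F \to 0
\]
with a twisted $d\log$ map, in the spirit of the diagram used in the proof of Theorem \ref{thm:pairunobs}, to produce a commutative diagram
\[
\xymatrix{
H^1(\tilde V, \mathcal K') \otimes_{\mathbb Z} \mathbb C \ar[r] \ar[d] & H^1(\tilde V, \mathcal O_{\tilde V}^*) \otimes_{\mathbb Z} \mathbb C \ar[d] \\
H^1(\tilde V, \Omega^1_{\tilde V}(\log F)(-F)) \ar[r] & H^1(\tilde V, \Omega^1_{\tilde V})
}
\]
where $\mathcal K' = \Ker(\mathcal O_{\tilde V}^* \to \mathcal O_F^*)$. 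Taking $\mathbb Z_r$-eigenspaces and applying Grothendieck--Serre duality on $\tilde V$ (noting that $\omega_V$ is trivial as a sheaf but shifts $\mathbb Z_r$-weights by a fixed character) identifies the contribution of $\mu^{(1)}(U,p)$ with the target of the map $\phi_U$ in (\ref{phiUdescription}) restricted to the appropriate eigenpart, and identifies $\mu^{(-1)}(U,p)$ with an $H^1$-term that receives the class-group map in the $(1)$-eigenpart.

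Finally, the Hodge-theoretic interpretation of $T^1$ for an isolated Gorenstein threefold rational singularity (Namikawa--Steenbrink) embeds $(T^1_{(V,q)})^{(0)}$ into the cokernel of the map from $\Cl(V,q)^{(1)} \otimes_{\mathbb Z} \mathbb C$ into the group computing $\mu^{(-1)}(U,p)$, provided the ``Hodge obstruction'' living in the Serre dual of $\mu^{(1)}(U,p)$ vanishes. Since we assume $\mu^{(1)}(U,p) = 0$, this obstruction is automatic, and a dimension count in the resulting short exact sequence yields
\[
\dim T^1_{(U,p)} \le \sigma^{(1)}(U,p) + \mu^{(-1)}(U,p),
\]
which is the required inequality. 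The main obstacle I expect is the eigenspace bookkeeping under Serre duality: because $\omega_V$ is trivial as a sheaf but carries a non-trivial $\mathbb Z_r$-character via its distinguished generator, the weights shift by $\pm 1$ under duality, and one has to match the $(1)$-eigenspace of $\Omega^1_{\tilde V}(\log F)(-F)$ precisely to the Serre dual of the $(0)$-eigenpart of the target of $\phi_U$ before the hypothesis $\mu^{(1)}(U,p)=0$ can be cashed in.
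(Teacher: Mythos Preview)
Your plan assembles the right ingredients---the log resolution of the index-one cover, the $\mathbb{Z}_r$-eigendecomposition, local/Serre duality, and input from Namikawa--Steenbrink---and is aiming at essentially the same argument as the paper. But the architecture you describe is off in two places, and one essential step is absent.

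The paper's actual route is this. From the restriction sequence
\[
0 \to \Omega^1_{\tilde V}(\log F)(-F) \to \Omega^1_{\tilde V}(\log F) \to \Omega^1_{\tilde V}(\log F)|_F \to 0
\]
(not the sequence ending in $\Omega^1_F$ that you wrote), take $H^1$ on the $(1)$-eigenpiece. The hypothesis $\mu^{(1)}=0$ kills the left term, and the identification $H^1(F,\Omega^1_{\tilde V}(\log F)|_F)\simeq \Gr_F^1 H^3_{\{q\}}(V,\mathbb C)$ together with $H^3_{\{q\}}(V,\mathbb C)\simeq\Cl(V,q)$ gives $h^1(\tilde V,\Omega^1_{\tilde V}(\log F))^{(1)}\le\sigma^{(1)}$. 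Local duality converts this to $h^2_F(\tilde V,\Omega^2_{\tilde V}(\log F)(-F))^{(-1)}\le\sigma^{(1)}$. Now the local-cohomology sequence for $\Omega^2_{\tilde V}(\log F)(-F)$ on the $(-1)$-eigenpiece has $H^1(V',\Omega^2_{V'})^{(-1)}\simeq T^1_{(U,p)}$ in the middle, flanked by $H^1(\tilde V,\Omega^2_{\tilde V}(\log F)(-F))^{(-1)}$ and the $H^2_F$-term just bounded. The first flank is controlled by the \emph{surjection}
\[
d\colon H^1(\tilde V,\Omega^1_{\tilde V}(\log F)(-F))^{(-1)}\twoheadrightarrow H^1(\tilde V,\Omega^2_{\tilde V}(\log F)(-F))^{(-1)},
\]
which is the specific Namikawa--Steenbrink input (from the proof of \cite[Theorem 1.1]{MR1358982}). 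This gives $\dim T^1_{(U,p)}\le\mu^{(-1)}+\sigma^{(1)}$.

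Two concrete issues with your outline. First, you never invoke the surjectivity of $d$; without it there is no link between $\mu^{(-1)}$, which is an $\Omega^1$-invariant, and the $\Omega^2$-cohomology that actually governs $T^1$. Second, the final shape is a two-term bound $\dim T^1\le A+B$ with $A\le\mu^{(-1)}$ and $B\le\sigma^{(1)}$, \emph{not} an embedding of $T^1$ into a cokernel of a map $\Cl(V,q)^{(1)}\to(\text{something of dimension }\mu^{(-1)})$ as you write; that would be a different and strictly stronger statement which the argument does not produce. The $d\log$/$\mathcal K'$ diagram you import from the proof of Theorem~\ref{thm:pairunobs} is a global Picard-surjectivity device and is not how $\sigma^{(1)}$ enters here; the paper reads $\sigma^{(1)}$ directly off the mixed Hodge structure on $H^3_{\{q\}}(V,\mathbb C)$.
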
 

\begin{proof}
We have an exact sequence of the $\mathbb{Z}_r$-eigenspaces 
\[
H^1(\tilde{V}, \Omega^1_{\tilde{V}}(\log F)(-F))^{(1)} 
\rightarrow H^1(\tilde{V}, \Omega^1_{\tilde{V}}(\log F))^{(1)} 
\rightarrow H^1(F, \Omega^1_{\tilde{V}}(\log F)|_F)^{(1)}  
\]
The assumption $\mu^{(1)}(U,p) =0$ implies that the L.H.S. is zero. 
Hence we obtain 
\[
h^1(\tilde{V}, \Omega^1_{\tilde{V}}(\log F))^{(1)} \le \sigma^{(1)}(U,p)
\]
since we have 
$
H^1(F, \Omega^1_{\tilde{V}}(\log F)|_F)^{(1)} \simeq \Gr_F^1 H^3_{\{ q\}}(V, \mathbb{C})^{(1)}$  
and $H^3_{\{ q\}}(V, \mathbb{C}) \simeq \Cl (V,q)$. 
By the local duality, we obtain 
\[
h^1(\tilde{V}, \Omega^1_{\tilde{V}}(\log F))^{(1)} = h^2_F(\tilde{V}, \Omega^2_{\tilde{V}}(\log F)(-F))^{(-1)}. 
\]
We also have an exact sequence of the eigenspaces 
\[
H^1(\tilde{V}, \Omega^2_{\tilde{V}}(\log F)(-F))^{(-1)} 
\rightarrow 
H^1(V', \Omega^2_{V'})^{(-1)} 
\rightarrow H^2_F(\tilde{V}, \Omega^2_{\tilde{V}}(\log F)(-F))^{(-1)}. 
\] 
Note that $H^1(V', \Omega^2_{V'})^{(-1)} \simeq H^1(U', \Omega^2_{U'}(-K_{U'})) \simeq T^1_{(U,p)}$. 
Also note that we have a surjection  
$d \colon H^1(\tilde{V}, \Omega^1_{\tilde{V}}(\log F)(-F))^{(-1)} \rightarrow H^1(\tilde{V}, \Omega^2_{\tilde{V}}(\log F)(-F))^{(-1)} $ which is an eigenpart of the surjection 
$d \colon H^1(\tilde{V}, \Omega^1_{\tilde{V}}(\log F)(-F)) \rightarrow H^1(\tilde{V}, \Omega^2_{\tilde{V}}(\log F)(-F))$ (See the proof of \cite[Theorem 1.1]{MR1358982}). 
By this exact sequence and the above inequality, we obtain 
\[
\dim T^1_{(U,p)} \le \mu^{(-1)}(U,p) + h^2_F(\tilde{V}, \Omega^2_{\tilde{V}}(\log F)(-F))^{(-1)} 
\le \mu^{(-1)}(U,p) + \sigma^{(1)}(U,p). 
\]
\end{proof}

\begin{eg}
Let us consider the terminal singularity $U := (xy+ f(z^r, u)=0)/ \mathbb{Z}_r(1,-1, a,0)$ 
for coprime positive integers $r, a$. 

For example, let $U:= (xy +z^2 +u^{2k+1}=0)/ \mathbb{Z}_2(1,1,1,0)$ 
for $k \in \mathbb{Z}_{>0}$. 
If $\mu^{(1)}(U,p)=0$, then, by Lemma \ref{lem:mu-1ineq} and the Gorenstein index $r=2$, 
we obtain  $\mu^{(1)}(U,p)= \mu^{(-1)}(U,p) \ge \dim (T^1_{(V,q)})^{(-1)} =k$ 
 since $(V,q)$ is locally factorial. 
This is a contradiction and we obtain $\mu^{(1)}(U,p) >0$. 

Similarly, we can compute $\mu^{(1)}(U,p) >0$ for $(U,p)= (xy +z^2 +u^{2k}=0)/ \mathbb{Z}_2(1,1,1,0)$ for $k \in \mathbb{Z}_{>1}$ as follows. 
Suppose that $\mu^{(1)}(U, p) =0$. 
Then we have $\sigma(V,q) =1$ and $\mu^{(1)}(U,p) = \mu^{(-1)}(U,p) \ge \dim T^1_{(U,p)} -1$ by Lemma \ref{lem:mu-1ineq} again. 
This is also a contradiction and we obtain $\mu^{(1)}(U,p) >0$.


For other terminal singularities, the computation may be possible, but complicated. 
\end{eg}

\subsection{On $\mathbb{Q}$-smoothability of a weak $\mathbb{Q}$-Fano $3$-fold}

By using the invariants of a terminal singularity as in the previous subsection, we can prove the following result on the existence of a global deformation of a weak $\mathbb{Q}$-Fano $3$-fold. 

\begin{thm}\label{thm:QsmQfactIndex1}
Let $X$ be a weak $\mathbb{Q}$-Fano $3$-fold. 

\begin{enumerate}
	\item[(i)] Then there exists a deformation 
$\phi \colon \mathcal{X} \rightarrow \Delta^1$ over an unit disk whose 
general fiber $\mathcal{X}_t$ satisfies the following: For $p \in \Sing \mathcal{X}_t$ and its 
Stein neighborhood $U_p$, 
we have $\mu^{(1)}(U_p, p) =0$. 
	\item[(ii)]
Let $D \in |{-}m K_X|$ be a smooth divisor for some positive integer $m >0$ such that $D \cap \Sing X = \emptyset$ which exists 
by the base point free theorem.  
Let  \[
Y:= \Spec \oplus_{i=0}^{m-1} \mathcal{O}_X(i K_X) \rightarrow X  
\]
be the cyclic cover determined by $D$. Assume that $Y$ is $\mathbb{Q}$-factorial. ($Y$ is 
the ``global index one cover'' in Theorem \ref{thm:wqfanoQsmintro}.)

Then $X$ can be deformed to a weak $\mathbb{Q}$-Fano $3$-fold with only quotient singularities and $A_{1,2}/4$-singularities. 
\end{enumerate}
\end{thm}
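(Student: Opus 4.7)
My plan for Theorem \ref{thm:QsmQfactIndex1} is as follows.

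For part (i), the strategy is to combine the unobstructedness of $\Def_X$ from Corollary \ref{cor: wqfano3unobs} with an upper-semicontinuity analysis for the invariant $\mu^{(1)}$. By Corollary \ref{cor: wqfano3unobs} the miniversal deformation $\mathcal{X} \to (S,0)$ of $X$ lives over a smooth base $S$, which I may algebraize via Artin's theorem. Working locally around each $p \in \Sing X$, I would construct a $\mathbb{Z}_r$-equivariant simultaneous log resolution of the relative index-one cover of $\mathcal{X}/S$ (after an \'etale base change if necessary), and then deduce upper semicontinuity of $t \mapsto \sum_{p \in \Sing \mathcal{X}_t} \mu^{(1)}(U_{t,p}, p)$ from cohomology base change applied to the relative eigenpart of $\Omega^1_{\tilde{\mathcal V}/S}(\log F)(-F)$. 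It then suffices to produce a single first-order deformation $v \in T^1_X = \Ext^1(\Omega^1_X, \mathcal{O}_X)$ whose image in $\bigoplus_p T^1_{(X,p)}$ corresponds to a strict decrease of $\mu^{(1)}$ at each singular point. At each singularity the required local direction exists because a terminal $3$-fold singularity admits a $\mathbb{Q}$-smoothing; applying this $v$ and restricting to a general one-parameter subfamily yields $\mathcal{X} \to \Delta^1$ with the claimed property.

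The main obstacle in part (i) is globalizing these local smoothing directions, i.e., controlling the cokernel of the local-to-global map $T^1_X \to \bigoplus_p T^1_{(X,p)}$ in the local-to-global $\Ext$-spectral sequence; the obstruction lives in $H^2(X, T_X)$. I would attack this by Serre duality on the smooth locus $U \subset X$: by Lemma \ref{lem:Extopenisom} and Proposition \ref{prop:S3flat} one can compute the relevant $\Ext$ on $U$, and then the dual of $H^2(U, T_U)$ relates via Kawamata-Viehweg-type vanishing (using that $-K_X$ is nef and big) to a vanishing that ensures the global existence of the required deformation direction.

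For part (ii), I would apply part (i) to the pair $(X, D)$ using the unobstructedness of $\Def_{(X,D)}$ from Theorem \ref{thm:pairunobs}, so that the cyclic cover $Y \to X$ deforms simultaneously to $\mathcal{Y}_t \to \mathcal{X}_t$ and the general $\mathcal{X}_t$ has $\mu^{(1)}=0$ at every singularity. Since $h^2(\mathcal{O}_{\mathcal{Y}_t})=0$ by Kawamata-Viehweg vanishing on the weak $\mathbb{Q}$-Fano $3$-fold $\mathcal{Y}_t$ and Picard rank is upper semicontinuous, the $\mathbb{Q}$-factoriality of $\mathcal{Y}_t$ is preserved near $0$. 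For each singularity $p$ of $\mathcal{X}_t$ with local index-one cover $(V_{t,p}, q)$, $\mathbb{Q}$-factoriality of $\mathcal{Y}_t$ at $q$ forces the local divisor class group $\Cl(V_{t,p}, q)$ to inject into the global Picard of $\mathcal{Y}_t$ modulo torsion, and taking the $(1)$-eigenpart with respect to the $\mathbb{Z}_r$-action yields $\sigma^{(1)}(U_{t,p}, p) = 0$.

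Combining $\mu^{(1)}=0$ with $\sigma^{(1)}=0$, the argument inside the proof of Lemma \ref{lem:mu-1ineq} actually gives the stronger conclusion $h^2_F(\tilde V, \Omega^2_{\tilde V}(\log F)(-F))^{(-1)} = 0$. The exact sequence
\[
H^1(\tilde V, \Omega^2_{\tilde V}(\log F)(-F))^{(-1)} \to H^1(V', \Omega^2_{V'})^{(-1)} \to H^2_F(\tilde V, \Omega^2_{\tilde V}(\log F)(-F))^{(-1)},
\]
together with the identification $H^1(V', \Omega^2_{V'})^{(-1)} \cong H^1(U', \Omega^2_{U'}(-K_{U'}))$ and a comparison between the local cohomology appearing here and $H^2_E(\tilde U, \mathcal{F}_U^{(0)})$, then shows that the coboundary $\phi_U$ of (\ref{phiUdescription}) vanishes. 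Applying Fact \ref{fact:phiU0}, every singularity of $\mathcal{X}_t$ is either a quotient or an $A_{1,2}/4$-singularity. The most delicate piece is the identification of $\phi_U$ with the coboundary in the displayed sequence, which requires carefully handling the twist by $-\nu^* K_V$ versus $(\log F)(-F)$ and tracking the $\mathbb{Z}_r$-eigenspaces through $\pi$.
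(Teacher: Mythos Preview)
Your plan has a genuine gap in both parts, and it is the same gap: you are trying to lift arbitrary local first-order deformations to global ones by killing $H^2(X,\Theta_X)$, but this group does \emph{not} vanish for a weak $\mathbb{Q}$-Fano $3$-fold in general. Indeed, if the local--to--global map $T^1_X \to \bigoplus_p T^1_{(X,p)}$ were surjective, every weak $\mathbb{Q}$-Fano $3$-fold would admit a $\mathbb{Q}$-smoothing, contradicting Minagawa's example \cite[Example 3.7]{MR1860839}. Kawamata--Viehweg gives you vanishing of $H^i(X,\mathcal{O}_X(jK_X))$ for suitable $i,j$, but it says nothing about $H^2$ of the tangent sheaf when $-K_X$ is only nef and big.

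The paper circumvents this by never asking for surjectivity. In diagram (\ref{minadiag}) the top row is exact, so a local class in $H^2_{E_i}(\tilde U_i,\mathcal{F}_i^{(0)})$ lifts to a global $\eta\in H^1(X',\Omega^2_{X'}(-K_{X'}))$ precisely when its image under $\Psi_i$ in $H^2(\tilde X,\mathcal{F}^{(0)})$ vanishes. For (i), Claim \ref{claim:notinjective} shows that $\mu^{(1)}(U_i,p_i)\neq 0$ forces $\Ker(\Psi_i\circ\varphi_i^{-1})\neq 0$: by duality this kernel is the annihilator of $\Image\gamma_i$, and $\gamma_i$ factors through $\partial_{V_i}^{(1)}$, whose cokernel has dimension exactly $\mu^{(1)}(U_i,p_i)$. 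Any nonzero $\alpha_i$ in this kernel then lifts to a global $\eta$ with $\phi_i(p_{U_i}(\eta))=\alpha_i\neq 0$, hence $p_{U_i}(\eta)\notin\Image(\nu_i)_*$ by (\ref{eq:nu_irelation}), and the singularity moves.

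For (ii) your argument has a second problem: you claim $\mathbb{Q}$-factoriality of $Y$ implies $\sigma^{(1)}(U_{t,p},p)=0$, but algebraic $\mathbb{Q}$-factoriality does not kill the local \emph{analytic} class group $\Cl(V,q)$ (a global Weil divisor being $\mathbb{Q}$-Cartier says nothing about local Weil divisors that do not extend). The paper uses $\mathbb{Q}$-factoriality only to prove Claim \ref{claim:compositionzero}, namely that the restriction $\Phi'_i\colon \Pic(\tilde Y)\otimes\mathbb{C}\to \Pic(V'_i)\otimes\mathbb{C}$ is zero (any line bundle on $\tilde Y$ restricts to a Weil, hence $\mathbb{Q}$-Cartier, hence torsion class on the local $V_i$). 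This gives $\Psi_i\circ\varphi_i^{-1}\circ\phi_i=0$, so the entire image of $\phi_i$ lifts globally; then Fact \ref{fact:phiU0} supplies a nonzero element of that image whenever the singularity is not already quotient or $A_{1,2}/4$. No passage through $\mu^{(1)}$, $\sigma^{(1)}$, or the $(\log F)(-F)$--twisted local cohomology is needed.
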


\begin{proof}
Let $\pi \colon Y \rightarrow X$ be the $\mathbb{Z}_m$-cover as in (ii) and 
let $\Sing Y =: \{p_1, \ldots, p_l \}$.  	
	
There exists a $\mathbb{Z}_m$-equivariant log resolution (\cite{MR1453072}) $\nu \colon  \tilde{Y} \rightarrow Y$ which induces an isomorphism 
$\nu^{-1}(Y \setminus \pi^{-1} \{p_1,\ldots, p_l \}) \rightarrow Y \setminus \pi^{-1} \{p_1,\ldots, p_l \}$ 
and $\nu^{-1}(\Sing Y)$ is a SNC divisor. Let   
$\mu \colon \tilde{X} := \tilde{Y}/ \mathbb{Z}_m \rightarrow X$ be 
a birational morphism induced by $\nu$. 

Let $U_i$ be a Stein neighborhood of $p_i$ for $i=1, \ldots, l$. 
Let $\pi_i \colon V_{i} := \pi^{-1}(U_i) \rightarrow U_i$ and 
$\nu_i \colon \tilde{V}_{i}:= \nu^{-1}(V_i) \rightarrow V_{i}$ be the morphisms induced by $\pi$ and $\nu$ 
respectively. 
Let $\tilde{U}_i := \tilde{V}_{i}/ \mathbb{Z}_m$ with a birational morphism $\mu_i \colon \tilde{U}_i 
\rightarrow U_i$ induced by $\nu_i$. 
 Let $F:= \Exc(\nu), E:= \Exc (\mu),  \Delta := \pi^{-1}(D) $ and $ L := \mathcal{O}_{Y}(\Delta) = \mathcal{O}_Y(\pi^*(-K_X))$.  
Also let $F_{i} := \Exc (\nu_{i})$ and 
$E_i := \Exc (\mu_i)$.  
 Let 
 \[
 \mathcal{F}^{(0)}:=  (\tilde{\pi}_* (\Omega^2_{\tilde{Y}} \otimes \nu^* L))^{\mathbb{Z}_m}
 \] be the $\mathbb{Z}_m$-invariant part and $\mathcal{F}_i^{(0)}:= \mathcal{F}^{(0)}|_{\tilde{U}_i}$ 
 be its restriction. 

Let $X':= X \setminus \{p_1, \ldots , p_l \}$. We have the following commutative diagram;  
\begin{equation}\label{minadiag}
\xymatrix{
      H^1(X', \Omega^2_{X'} \otimes \omega_{X'}^{-1} ) \ar[r]^{\oplus \psi_i} \ar[d]^{\oplus p_{U_i}} & 
\oplus_{i=1}^{l} H^2_{E_i}(\tilde{X}, \mathcal{F}^{(0)} )
\ar[d]_{\oplus \varphi_i}^{\simeq} \ar[r]^{\ \ \oplus \Psi_i} &  H^2 (\tilde{X}, \mathcal{F}^{(0)} ) \\
     \oplus_{i=1}^{l} H^1(U'_i, \Omega^2_{U'_i}\otimes \omega_{U'_i}^{-1}) \ar[r]^{\oplus \phi_{i}}& 
\oplus_{i=1}^{l}  H^2_{E_i}(\tilde{U}_i, \mathcal{F}_{i}^{(0)}). 
     }
 \end{equation}
Now let $V'_i:= V_i \setminus \pi_i^{-1}(p_i)$ and $\tilde{\Delta}:= \nu^{-1}(\Delta)$. 

\vspace{2mm}

\noindent(i) 
	Let $p_1, \ldots, p_{l'} \in \Sing X$ be the singular points such that 
	$\mu^{(1)}(U_i ,p_i) \neq 0$ for $i = 1, \ldots, l'$. 
	
	\begin{claim}\label{claim:notinjective}
		The homomorphism $\Psi_i \circ \varphi_i^{-1}$ is not injective for $i=1, \ldots, l'$. 
		\end{claim} 
		
		\begin{proof}[Proof of Claim]
The homomorphism is the $\mathbb{Z}_m$-eigenpart  
\[
H^2_{F_i}(\tilde{V}_i, \Omega^2_{\tilde{V}_i})^{(-1)} \xrightarrow{\simeq}
 H^2_{F_i}(\tilde{Y}, \Omega^2_{\tilde{Y}}(\tilde{\Delta}))^{(-1)} 
 \rightarrow H^2(\tilde{Y}, \Omega^2_{\tilde{Y}}(\tilde{\Delta}))^{(-1)} 
\]
The homomorphism is dual to the eigenpart  
\[
\gamma_i \colon H^1(\tilde{Y}, \Omega^1_{\tilde{Y}}(- \tilde{\Delta}))^{(1)} 
\rightarrow H^1(\tilde{V}_i, \Omega^1_{\tilde{V}_i})^{(1)} 
\]
of the restriction homomorphism. 
This homomorphism fits in the following commutative diagram
\[
\xymatrix{
H^1(\tilde{Y}, \Omega^1_{\tilde{Y}}(- \tilde{\Delta}))^{(1)} \ar[d] \ar[dr]^{\gamma_i} & \\
H^1(\tilde{Y}, \Omega^1_{\tilde{Y}})^{(1)} \ar[r] & H^1(\tilde{V}_i, \Omega^1_{\tilde{V}_i})^{(1)} \\
H^1(\tilde{Y}, \mathcal{O}_{\tilde{Y}}^*)^{(1)}_{\mathbb{C}} \ar[r] \ar[u]^{\simeq} & 
H^1(\tilde{V}_i, \mathcal{O}_{\tilde{V}_i}^*)^{(1)}_{\mathbb{C}}  \ar[u]^{\partial_{V_i}^{(1)}}
}
\]
Hence it is enough to check the corresponding eigenpart  
\[
\partial_{V_i}^{(1)} \colon  H^1(\tilde{V}_i, \mathcal{O}_{\tilde{V}_i}^*)^{(1)}_{\mathbb{C}}  
\rightarrow H^1(\tilde{V}_i, \Omega^1_{\tilde{V}_i})^{(1)} 
\]
of $\partial_{V_i} \colon H^1(\tilde{V}_i, \mathcal{O}_{\tilde{V}_i}^*)_{\mathbb{C}}  
\rightarrow H^1(\tilde{V}_i, \Omega^1_{\tilde{V}_i})$ is not surjective. 
We see that the dimension of its cokernel is 
$\mu^{(1)}(U_i, p_i)= h^1(\tilde{U}_i, \mathcal{G}_{U_i}^{(1)})$ by \cite[Proposition 2.1]{MR1358982}. 
Since this is non-zero by the assumption, we obtain the claim.  			
			\end{proof} 

Let 
\[
(\nu_i)_* \colon H^1(\tilde{V}_i, \Omega^2_{\tilde{V}_i}(-K_{\tilde{V}_i})) 
\rightarrow H^1(V'_i, \Omega^2_{V'_i}(-K_{V'_i}))
\] be the restriction homomorphism which 
can also be regarded as the blow-down morphism (cf. \cite[Section 2]{MR3692020}). 
Note that 
\begin{equation}\label{eq:nu_irelation}
\Image (\nu_i)_* \subset \Ker \tau_{V_i}
\end{equation}
as in \cite[(2.12)]{MR3692020}, where $\tau_{V_i} 
\colon H^1(V'_i, \Omega^2_{V'_i}(-K_{V'_i})) \rightarrow H^2_{F_i}(\tilde{V}_i, \Omega^2_{\tilde{V}_i}(\nu_i^*(-K_{V_i})))$ is 
the coboundary map. 
By using this relation, we can find a good deformation as follows. 

By Claim \ref{claim:notinjective}, there exists 
$\alpha_i \in H^2_{E_i}(\tilde{U}_i, \mathcal{F}_{i}^{(0)})$ such that 
$\Psi_i (\varphi_i^{-1}(\alpha_i)) = 0$. 
Hence there exists $\eta \in H^1(X', \Omega^2_{X'}(- K_{X'}))$ such that
 $\psi_i (\eta)= \varphi_i^{-1}(\alpha_i)$. 
Then we have $p_{U_i}(\eta) = \alpha_i \neq 0$, thus 
$p_{U_i}(\eta) \not\in \Ker \phi_i$. 
By this and the relation (\ref{eq:nu_irelation}), we see that $p_{U_i}(\eta) \not\in \Image (\nu_i)_*$. 
By arguing as in the proof of \cite[Theorem 1.4]{MR3692020}, we can deform singularity $p_i \in U_i$ as long as the invariant 
$\mu^{(1)}$ is non-zero. Thus, by Theorem \ref{thm:wqfano3intro}, we obtain a deformation with the required property. 

\vspace{2mm}

\noindent(ii)	The framework of the proof is similar to that of \cite[Proof of Theorem 1.4]{MR3692020}. 
	
	Let  $p_1,\ldots, p_{l'}$ for some $l' \le l$ be the singularities which are neither quotient singularities nor 
	 $A_{1,2}/4$-singularities. 

\begin{claim}\label{claim:compositionzero}
	We have $\Psi_i \circ (\varphi_i^{-1}) \circ \phi_i =0$ for all $i$. 
	\end{claim}
	
	\begin{proof}[Proof of Claim]

We see that the composition $\Psi_i \circ (\varphi_i^{-1}) \circ \phi_i$ is 
one of the $\mathbb{Z}_m$-eigenparts of the composition 
\begin{equation}
H^1(V'_i, \Omega^2_{V'_i}) \xrightarrow{\tau_{V_i}} H^2_{F_i}(\tilde{V}_i, \Omega^2_{\tilde{V}_i})
\xrightarrow{\simeq} H^2_{F}(\tilde{Y}, \Omega^2_{\tilde{Y}}(\tilde{\Delta}))
\rightarrow H^2(\tilde{Y}, \Omega^2_{\tilde{Y}}( \tilde{\Delta})). 
\end{equation}
Note that $\tilde{\Delta} \cap \tilde{V}_i = \emptyset$ for all $i$. 
This is dual to the restriction homomorphism 
\[
H^1(\tilde{Y}, \Omega^1_{\tilde{Y}}(- \tilde{\Delta})) \rightarrow H^1(V'_i, \Omega^1_{V'_i}) 
\]
and it can be decomposed as 
\[
H^1(\tilde{Y}, \Omega^1_{\tilde{Y}}(- \tilde{\Delta})) \rightarrow 
H^1(\tilde{Y}, \Omega^1_{\tilde{Y}}) \xrightarrow{\Phi_i} 
 H^1(V'_i, \Omega^1_{V'_i}) 
\]
and $\Phi_i$ fits in the following commutative diagram
\[
\xymatrix{
H^1(\tilde{Y}, \Omega^1_{\tilde{Y}}) \ar[r]^{\Phi_i} &  
 H^1(V'_i, \Omega^1_{V'_i}) \\
H^1(\tilde{Y}, \mathcal{O}_{\tilde{Y}}^*) \otimes_{\mathbb{Z}} \mathbb{C} 
\ar[r]^{\Phi'_i} \ar[u]^{\simeq} & H^1(V'_i, \mathcal{O}_{V'_i}^*) \otimes_{\mathbb{Z}} \mathbb{C}.  
\ar[u]
}
\]
We see that $\Phi'_i$ is zero since $Y$ is $\mathbb{Q}$-factorial by \cite[12.1.6]{MR1149195} (cf. the 
proof of \cite[Proposition 1.2]{MR1358982}). Thus we obtain the claim. 
\end{proof}

By Fact \ref{fact:phiU0}, for $i=1, \ldots, l'$, there exists an element $\eta_i \in H^1(U'_i, \Omega^2_{U'_i}(-K_{U'_i}))$ such that $\phi_i (\eta_i) \neq 0$. 
By Claim \ref{claim:compositionzero}, there exists 
$\eta \in H^1(X', \Omega^2_{X'}(-K_{X'}))$ such that $\psi_i(\eta)= \varphi_i^{-1}(\phi_i(\eta_i))$. 

By the relation (\ref{eq:nu_irelation}) and $p_{U_i}(\eta) - \eta_i \in \Ker \phi_i$, 
we see that $p_{U_i}(\eta) \not\in \Image (\nu_i)_*$, where we use the inclusion 
$H^1(U'_i, \Omega^2_{U'_i}(-K_{U'_i})) \subset H^1(V'_i, \Omega^2_{V'_i}(-K_{U'_i}))$ 
and regard $\phi_i$ as the restriction of $\tau_{V_i}$. 
By arguing as in the proof of \cite[Theorem 1.4]{MR3692020}, we can deform singularity $p_i \in U_i$ as long as 
$\phi_i \neq 0$. 
By Fact \ref{fact:phiU0} and Theorem \ref{thm:wqfano3intro}, we obtain a required deformation. 
\end{proof}

\section{Deformations of a $\mathbb{Q}$-Fano $3$-fold via its canonical covering stack}\label{section:stack}
In this section, we explain the canonical covering stack associated to a $3$-fold 
with only terminal singularities. 
We use it to prove the unobstructedness of deformations of a $\mathbb{Q}$-Fano $3$-fold. 

\subsection{Preliminaries on canonical covering stacks}
Let $X$ be a $3$-fold with only terminal singularities. 
Let $\Sing X=: \{p_1, \ldots, p_l \}$, $p_i \in U_i$ a small affine neighborhood of $p_i$ 
such that $U_i \cap \Sing X = \{p_i \}$,     
and $\pi_i \colon V_i \to U_i$ be the index one cover for $i=1, \ldots, l$. 
Let $V_0:= X \setminus \Sing X$ and $\pi_0 \colon V_0 \to X$ the open immersion. 
Let $V:= \coprod_{i=0}^l V_i$ and $\pi \colon V \to X$ the morphism such that 
$\pi|_{V_i} = \pi_i$ for $i=0, \ldots, l$. 
Let $G_i:= \Gal (V_i/U_i) \simeq \mathbb{Z}_{r_i}$ and 
\[
W:= (\coprod_{i} V_i \times G_i )\sqcup (\coprod_{i \neq j} V_i \times_{X} V_j ). 
\] 
We shall define an \'{e}tale groupoid space 
\[
\mathfrak{G}(V) := (\xymatrix{ 
W{}_s \underset{V}{\times} {}_t W \ar[r]^{\quad\;\;\; m} &
W \ar@(ur,ul)[]_i \ar@<2ex>[r]^s \ar[r]^t &
V \ar@<1ex>[l]^e
})
\]
 by setting $m, s, t$ as follows: For $i = 1, \ldots, l$ and $i \neq j$,  let 
 \[
 W_i:= V_i \times G_i, \ \ \ \ W_{ij}:= V_i \times_X V_j.
 \] 
For $i=1, \ldots, l$,  let $s_i \colon V_i \times G_i \rightarrow V_i$ be the projection 
and $t_i \colon V_i \times G_i \rightarrow V_i$ be the morphism defining the $G_i$-action on $V_i$, that is, the diagram  
\[
\xymatrix{
V_i \times G_i \ar@<-.5ex>[r]_{t_i} \ar@<.5ex>[r]^{s_i} & V_i 
}
\]
is a groupoid space defining the quotient stack $[V_i / G_i]$.  
For $i \neq j$, let $s_{i, j} \colon V_i \times_X V_j \rightarrow V_i$ and  
$t_{i, j} \colon V_i \times_X V_j \rightarrow V_j$ 
be the projections. 
We define \[
s, t \colon W \rightrightarrows V
\] by $s|_{V_i \times G_i}= s_i$ (resp. $t|_{V_i \times G_i}= t_i$) and $s|_{V_i \times_X V_j} = s_{i, j}$ 
(resp. $t|_{V_i \times_X V_j} = t_{i, j}$). 
In order to define the multiplication map 
$m \colon W \times_V W \rightarrow W$, we use the following morphisms; 
\[
m_{i, i} \colon (V_i \times G_i) \times_{V_i} (V_i \times G_i) \rightarrow V_i \times G_i, 
\]  
\[
m_{i, (j, i)} \colon (V_i \times G_i) \times_{V_i} (V_j \times_X V_i ) \rightarrow V_j \times_X V_i,   
\]
\[
m_{(i,j), i} \colon  (V_i \times_X V_j )  \times_{V_i} (V_i \times G_i) \rightarrow V_i \times_X V_j, 
\]
\[
m_{(j,k), (i, j)}  (V_j \times_X V_k )  \times_{V_j} (V_i \times_X V_j) \rightarrow V_i \times_X V_k, 
\]
where $m_{i, i}$ is induced by the multiplication of the group $G_i$, $m_{(i,j), i} $ and $m_{(i,j), i}$ 
are the base change of the $G_i$-action morphism $t_i$, and $m_{(j,k), (i, j)}$ is the natural projection of the fiber products. 
We define $m$ by \[
m|_{W_i \times W_i} = m_{i, i}, \ \ \ \  m|_{W_i \times W_{ji}} = m_{i, (j,i)}, 
\]
\[
m|_{W_{ij} \times_{V_i} W_i} = m_{(i,j), i}, \ \ \ \ m|_{W_{jk} \times W_{ij}} = m_{(j,k), (i,j)}. 
\]

Let $\mathfrak{X}$ be the Deligne-Mumford stack associated to the groupoid space $W \rightrightarrows V$ (cf.\ \cite[Definition 6.1]{Kawamata_DEKE}). 
Let $\gamma \colon \mathfrak{X} \rightarrow X$ be the morphism to the coarse moduli space.
Note that the coarse moduli space of $\mathfrak{X}$ is isomorphic to $ X$ since it can be constructed by gluing $V_i/G_i \simeq U_i$ 
with respect to the same gluing morphism as $X$. 
We can define a functor $\Def_{\mathfrak{X}} \colon \Art_{\mathbb{C}} \rightarrow (Sets)$ 
of deformations of the stack $\mathfrak{X}$ over Artinian rings as in the case of schemes.  

\begin{rem}
Aoki (\cite[Proposition 3.2.5]{MR2099768}) pointed out that $\Def_{\mathfrak{X}}$ is isomorphic to the deformation functor 
	$\Def_{(W \rightrightarrows V)}$ of the \'{e}tale groupoid space $(W \rightrightarrows V)$. 
	The deformation functor of an \'{e}tale groupoid space can be defined in an obvious way.  
\end{rem}

\begin{rem}
	Hacking (\cite[Section 3]{MR2078368}) considered a canonical covering stack of a slc surface and its deformation theory. 
	The theory is parallel in the case of a 3-fold with terminal singularities. 
	\end{rem}

\begin{lem}(cf. \cite[Proposition 3.7]{MR2078368})
There is an isomorphism of functors
\begin{equation}\label{eq:coarsefunctorisom}
c \colon \Def_{\mathfrak{X}} \xrightarrow{\sim} \Def_X 
\end{equation}
which sends a deformation of $\mathfrak{X}$ to its coarse moduli space. 
\end{lem}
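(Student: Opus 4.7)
The plan is to construct an explicit quasi-inverse $c^{-1} \colon \Def_X \to \Def_{\mathfrak{X}}$ by taking, at each singular point $p_i$, the relative index-one cover of the given deformation of $X$. The essential local input is that for a terminal singularity $(U_i, p_i)$ with index-one cover $V_i \to U_i$ and Galois group $G_i$, the cyclic cover induces an equivalence between deformations of $U_i$ and $G_i$-equivariant deformations of $V_i$, via the graded algebra $\bigoplus_{j=0}^{r_i-1} \omega_{U_i}^{[-j]}$. Globally, one then glues the local charts using the étale groupoid structure described above.

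First I would verify that $c$ is well-defined. Using Aoki's remark, a deformation $\mathfrak{X}_A \in \Def_{\mathfrak{X}}(A)$ is presented by a deformation $(\mathcal{W}_A \rightrightarrows \mathcal{V}_A)$ of the groupoid $(W \rightrightarrows V)$, so each chart $\mathcal{V}_{i,A}$ is a $G_i$-equivariant deformation of $V_i$ and $\mathcal{U}_{i,A} := \mathcal{V}_{i,A}/G_i$ is flat over $A$ (since $|G_i|$ is invertible in $\mathbb{C}$, taking $G_i$-invariants commutes with base change). The compatibility data encoded in $\mathcal{V}_{i,A} \times_{X_A} \mathcal{V}_{j,A}$ and the inclusion of $\mathcal{V}_{0,A}$ glue these charts to a scheme $X_A$ flat over $A$ with $X_A \otimes_A \mathbb{C} \simeq X$, giving $X_A \in \Def_X(A)$.

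For the inverse, given $X_A \in \Def_X(A)$, I would pick a small open $\mathcal{U}_{i,A} \subset X_A$ with underlying space $U_i$ and form the relative index-one cover
\[
\mathcal{V}_{i,A} := \Spec_{\mathcal{U}_{i,A}} \bigoplus_{j=0}^{r_i-1} \omega_{\mathcal{U}_{i,A}/A}^{[-j]},
\]
where $\omega^{[-j]}$ denotes the reflexive power taken relative to $A$. Set $\mathcal{V}_{0,A} := X_A \setminus \Sing X$ with its scheme structure from $X_A$, $\mathcal{V}_A := \coprod_i \mathcal{V}_{i,A}$, and define $\mathcal{W}_A$ by replacing each $V_i \times G_i$ with $\mathcal{V}_{i,A} \times G_i$ and each $V_i \times_X V_j$ with $\mathcal{V}_{i,A} \times_{X_A} \mathcal{V}_{j,A}$, transporting the source, target, and multiplication maps verbatim. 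This yields an étale groupoid whose associated DM stack $\mathfrak{X}_A$ deforms $\mathfrak{X}$; setting $c^{-1}(X_A) := \mathfrak{X}_A$ and checking functoriality in $A$ gives a natural transformation.

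The mutual inverse property then follows: $c \circ c^{-1} = \id$ because $\mathcal{V}_{i,A}/G_i$ recovers $\mathcal{U}_{i,A}$ via $G_i$-invariants; $c^{-1} \circ c = \id$ because if $\mathcal{V}_{i,A}$ is a $G_i$-equivariant deformation of $V_i$ with quotient $\mathcal{U}_{i,A}$, then the pushforward $(\pi_{i,A})_* \mathcal{O}_{\mathcal{V}_{i,A}}$ splits into $\mathbb{Z}_{r_i}$-eigen summands identified with $\omega_{\mathcal{U}_{i,A}/A}^{[-j]}$, so forming the index-one cover of $\mathcal{U}_{i,A}$ reproduces $\mathcal{V}_{i,A}$ with its $G_i$-action. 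The main obstacle is the $A$-flatness of $\omega_{\mathcal{U}_{i,A}/A}^{[-j]}$ and its commutation with Artinian base change, and correspondingly the $A$-flatness of the cyclic cover algebra; this can be proved by induction on the length of $A$ using the local criterion of flatness, combined with the fact that reflexive powers of $\omega_X$ on a $3$-fold with terminal singularities are Cohen-Macaulay, so Proposition \ref{prop:S3flat} applies after removing the singular locus on which the reflexive powers are invertible and thus trivially flat.
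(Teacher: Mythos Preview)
Your proposal is correct and follows essentially the same route as the paper: construct the inverse by taking relative index-one covers $\Spec \bigoplus_j \omega_{\mathcal{U}_{i,A}/A}^{[j]}$ over each chart, assemble them into a deformation of the groupoid presentation, and invoke Proposition~\ref{prop:S3flat} (Cohen--Macaulayness of reflexive pluricanonical sheaves on terminal $3$-folds) for the required $A$-flatness and base-change compatibility. The only cosmetic discrepancy is that you use $\omega^{[-j]}$ while the paper uses $\omega^{[j]}$; both conventions yield the same cyclic cover up to relabeling the $\mathbb{Z}_{r_i}$-action, so this is harmless.
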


\begin{proof}
	We can construct a natural transformation $c' \colon \Def_X \rightarrow \Def_{\mathfrak{X}}$ as follows; 
	Let $X_A \rightarrow \Spec A$ be a deformation of $X$ over $A \in \Art_{\mathbb{C}}$. 
	Let $\{ U_i \}_{i=1}^n$ be an affine open sets of $X$ as above and $U_{i,A} \rightarrow \Spec A$ be a deformation of $U_i$ induced by $X_A$. 
	Let $\iota_i \colon U'_i:= U_i \setminus \{p_i \} \rightarrow U_i$ be the open immersion from the smooth locus for $i=1, \ldots, l$ and 
	  $\omega_{U_{i,A}}^{[j]}:= (\iota_i)_* \omega^{\otimes j}_{U'_{i,A}}$ for $j \in \mathbb{Z}$. 
	  We see that $\omega_{U_{i,A}}^{[j]}$ is flat over $A$ by Proposition \ref{prop:S3flat}.   
	Thus we see that 
	\[
	V_{i,A}:= \Spec \oplus_{j=0}^{r_i -1} \omega_{U_{i,A}/A}^{[j]} \rightarrow \Spec A 
	\] 
 is a $G_i$-equivariant deformation of $V_i$.  
	Let $V_{0,A}$ be a deformation of $V_0$ induced by $X_A$, 
	 $V_A :=  \coprod_{i=0}^l V_{i,A}$ and 
	\[
	W_A:= (\coprod_{i} V_{i,A} \times G_i )\sqcup (\coprod_{i \neq j} V_{i,A} \times_{X_A} V_{j,A} ). 
	\] We can construct an \'{e}tale groupoid space 
	$W_A \rightrightarrows V_A$ similarly as $W \rightrightarrows V$ and this defines 
	an element of $ \Def_{(W \rightrightarrows V)}(A)$, thus 
	an element $c'(X_A):= \mathfrak{X}_A \in \Def_{\mathfrak{X}}(A)$. 
	We see that $\mathfrak{X}_A$ is flat over $A$ since we can check it locally. 
	
	We can check that $c'$ is an inverse of $c$ as follows:  
	
	Given $X_A \in \Def_X(A)$. 
	We see that the coarse moduli space of the groupoid space $W_A \rightrightarrows V_A$ is isomorphic to $X_A$ since 
	it can be constructed by gluing $V_{i,A}/G_i \simeq U_{i, A}$ and the gluing isomorphism is same as $X_A$. 
	
	Conversely, given $\mathfrak{X}_A \in \Def_{\mathfrak{X}}(A)$. 
	Then we have the corresponding \'{e}tale groupoid space 
	\[
	W'_A= \coprod_{i} W'_{i, A} \sqcup \coprod_{i \neq j} W'_{ij, A} \rightrightarrows V'_A = \coprod_{i=0}^l V'_{i,A}
	\] which is a deformation of 
	$W \rightrightarrows V$. We can check  
	that $W'_{i,A} \rightrightarrows V'_{i, A}$ 
	is isomorphic to a groupoid space 
	$V'_{i, A} \times G_i \rightrightarrows V'_{i, A}$. 
	Hence the coarse moduli space $c(\mathfrak{X}_A)$ of $\mathfrak{X}_A$ has an open covering by affine schemes isomorphic to 
	$U'_{i, A}:= V'_{i, A}/G_i$. 
	We obtain the same groupoid spaces from $\mathfrak{X}_A$ and $c'(c(\mathfrak{X}_A))$ by the construction of $c'$. 
	\end{proof}

We can construct obstructions for deformations of $\mathfrak{X}$ as follows.  

\begin{prop}
	Let $X$ be a $3$-fold with terminal singularities and 
	$\mathfrak{X}$ its canonical covering stack. 
	
	Then we can define an obstruction $o_{\xi_n} \in \Ext^2_{\mathcal{O}_{\mathfrak{X}}}( \Omega^1_{\mathfrak{X}}, \mathcal{O}_{\mathfrak{X}} )$ 
	to lift a deformation $\xi_n \in \Def_{\mathfrak{X}}(A_n)$ to $A_{n+1}$.  
	\end{prop}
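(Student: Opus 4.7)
The plan is to imitate the standard construction of the obstruction class for deformations of a scheme (as in Sernesi \cite{MR2247603}, Prop.\ 2.4.8) but carried out in the \'{e}tale groupoid setting, using the equivalence $\Def_\mathfrak{X} \simeq \Def_{(W \rightrightarrows V)}$ noted above, and to exploit the fact that each index-one cover $V_i$ is a locally complete intersection in order to land the obstruction in $\Ext^2_{\mathcal{O}_\mathfrak{X}}(\Omega^1_\mathfrak{X}, \mathcal{O}_\mathfrak{X})$ rather than in the hypercohomology of the full cotangent complex.

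First I would take a small extension $0 \to J \to A_{n+1} \to A_n \to 0$ with $\mathfrak{m}_{A_{n+1}} J = 0$, and construct local lifts chart by chart. Each germ $(V_i, q_i)$ is a Gorenstein terminal (hence isolated cDV, hence hypersurface) singularity, so $V_i$ is affine and l.c.i.; consequently the affine deformation $V_{i,n}$ admits a lift $V_{i,n+1}$ over $A_{n+1}$, and one can make this lift $G_i$-equivariant by averaging with respect to the finite cyclic group $G_i$ (linear reductivity in characteristic zero). Similarly lift $V_{0,n}$ on an affine \'{e}tale cover. Together this produces a lift $V_{n+1}$ of the object side of the groupoid.

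Next, on the arrow side, the components $W_{ij,n} = V_{i,n} \times_{X_n} V_{j,n}$ and $W_{i,n} = V_{i,n} \times G_i$ acquire two candidate lifts inside $V_{i,n+1}$ (via source) and $V_{j,n+1}$ (via target); these two lifts of $W_{ij,n}$ differ by a section of $\mathcal{T}_{V_{ij}} \otimes J \cong \mathcal{H}om(\Omega^1_{V_{ij}}, \mathcal{O}_{V_{ij}}) \otimes J$, and similarly for the multiplication $m$ on triple fiber products. These discrepancies assemble into a \v{C}ech cocycle on the nerve of the groupoid $(W \rightrightarrows V)$ with values in $R\mathcal{H}om(\Omega^1_\mathfrak{X}, \mathcal{O}_\mathfrak{X})$. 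Via the \'{e}tale-local-to-global spectral sequence for the hypercover $V \to \mathfrak{X}$, its class defines the obstruction $o_{\xi_n} \in \Ext^2_{\mathcal{O}_\mathfrak{X}}(\Omega^1_\mathfrak{X}, \mathcal{O}_\mathfrak{X})$. Because each $V_i$ is l.c.i., the purely local obstruction contributions vanish, so only the gluing piece survives, which is what legitimises working with $\Omega^1_\mathfrak{X}$ in place of the cotangent complex.

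The main obstacle is the bookkeeping on the groupoid: one must verify that changing the chosen local lifts modifies the cocycle by a coboundary (so that the class is independent of choices), that $o_{\xi_n}$ vanishes precisely when a global lift to $A_{n+1}$ exists, and that compatibility with the multiplication $m$ is respected throughout. This is the standard diagram chase of deformation theory, only performed on the simplicial object associated to $(W \rightrightarrows V)$; the $G_i$-equivariance and the multiplication compatibility are the delicate points, and they are handled by the averaging trick together with the fact that restrictions to the smooth locus $V_0$ are automatically coherent.
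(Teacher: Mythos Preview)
Your approach is essentially correct, but it takes a genuinely different route from the paper's. You construct the obstruction by the \v{C}ech/simplicial method on the groupoid presentation: lift each affine chart $V_{i,n}$ over $A_{n+1}$ (possible since $V_i$ is affine l.c.i.), promote the lift to a $G_i$-equivariant one via linear reductivity, and then measure the failure of these local lifts to glue along the arrows and the multiplication. One small imprecision: on $W_{ij}$ the two induced lifts are \emph{a priori} only known to be isomorphic because the obstruction in $\Ext^1(\Omega^1_{W_{ij}},J)$ vanishes ($W_{ij}$ is smooth affine, lying over the smooth locus of $X$); the section of $\mathcal{T}_{W_{ij}}\otimes J$ you write down is really the ambiguity in the \emph{choice} of isomorphism, and it is the failure of these choices to satisfy the cocycle condition on triple intersections (and compatibility with $m$) that produces the degree-$2$ class. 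With that adjustment your outline goes through.

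The paper instead uses the intrinsic Yoneda $2$-extension construction, parallel to \cite[Proposition~2.4.8]{MR2247603} and \cite[Proposition~2.6]{MR3419958}, carried out directly on the stack $\mathfrak{X}$ with no reference to the groupoid presentation. From $\phi_n\colon\mathfrak{X}_n\to\Spec A_n$ one splices the pullback of $0\to(t^{n+1})\to\Omega^1_{A_{n+1}/\mathbb{C}}\otimes_{A_{n+1}}A_n\to\Omega^1_{A_n/\mathbb{C}}\to 0$ with the relative cotangent sequence of $\phi_n$ to obtain a four-term exact sequence
\[
0\ \longrightarrow\ \mathcal{O}_{\mathfrak{X}}\ \longrightarrow\ \mathcal{O}_{\mathfrak{X}_n}\ \longrightarrow\ \Omega^1_{\mathfrak{X}_n/\mathbb{C}}\ \longrightarrow\ \Omega^1_{\mathfrak{X}_n/A_n}\ \longrightarrow\ 0,
\]
whose Yoneda class lies in $\Ext^2_{\mathcal{O}_{\mathfrak{X}_n}}(\Omega^1_{\mathfrak{X}_n/A_n},\mathcal{O}_{\mathfrak{X}})\simeq\Ext^2_{\mathcal{O}_{\mathfrak{X}}}(\Omega^1_{\mathfrak{X}},\mathcal{O}_{\mathfrak{X}})$; vanishing of this class is then checked to be equivalent to the existence of a lift exactly as in the scheme case. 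The paper's argument is much shorter and sidesteps precisely the ``bookkeeping on the groupoid'' you flag as the main obstacle, at the price of being less explicit about where the l.c.i.\ hypothesis enters (it is hidden in the exactness of the relative cotangent sequence and in the identification of the $\Ext$ groups). Your approach, by contrast, makes the local unobstructedness of the charts $V_i$ visible as the mechanism that pushes the obstruction entirely into the gluing data.
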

	
	\begin{proof} 
		The construction is parallel to \cite[Proposition 2.4.8]{MR2247603} or \cite[Proposition 2.6]{MR3419958}.  Let us recall the construction. 
		
		Let $\phi_n \colon \mathfrak{X}_n \rightarrow \Spec A_n$ be 
		the given deformation. 
		By pulling back the exact sequence $0 \rightarrow (t^{n+1}) \xrightarrow{d} \Omega^1_{A_{n+1}/ \mathbb{C}} 
		\otimes_{A_{n+1}} A_n \rightarrow \Omega^1_{A_n/ \mathbb{C}} \rightarrow 0$ to $\mathfrak{X}_n$,  
		we obtain 
		\[
		0 \rightarrow \mathcal{O}_{\mathfrak{X}}
		\rightarrow \mathcal{O}_{\mathfrak{X}_n}
		\rightarrow \mathcal{O}_{\mathfrak{X}_{n-1}}
		\rightarrow 0. 
		\]
		We also have an exact sequence 
		\[
		0 \rightarrow \phi_n^* \Omega^1_{A_n/ \mathbb{C}} 
		\rightarrow \Omega^1_{\mathfrak{X}_n / \mathbb{C}} 
		\rightarrow \Omega^1_{\mathfrak{X}_n / A_n} 
		\rightarrow 0. 
		\] 
		By combining these sequences, we obtain an exact sequence 
		\[
		0 \rightarrow \mathcal{O}_{\mathfrak{X}}
		\rightarrow \mathcal{O}_{\mathfrak{X}_n} 
		\rightarrow \Omega^1_{\mathfrak{X}_n / \mathbb{C}} 
		\rightarrow \Omega^1_{\mathfrak{X}_n / A_n} 
		\rightarrow 0. 
		\] 
		This determines an element $o_{\xi_n} \in
		\Ext^2_{\mathcal{O}_{\mathfrak{X}_n}}( \Omega^1_{\mathfrak{X}_n/A_n}, \mathcal{O}_{\mathfrak{X}} ) \simeq \Ext^2_{\mathcal{O}_{\mathfrak{X}}}( \Omega^1_{\mathfrak{X}}, \mathcal{O}_{\mathfrak{X}} )$, where the isomorphism follows similarly as \cite[Lemma 2.5]{MR3419958}. 
		We can check that, if $o_{\xi_n} =0$, then the lift $\xi_{n+1} \in \Def_{\mathfrak{X}}(A_{n+1})$ of $\xi_n$ exists as \cite[Proposition 2.6]{MR3419958}. 
		\end{proof}

	\subsection{Proof of unobstructedness}
	
	\begin{thm}\label{thm:QFano3unobs}
		Let $X$ be a $\mathbb{Q}$-Fano $3$-fold. 
		Then the deformation functor $\Def_X$ is unobstructed. 
		\end{thm}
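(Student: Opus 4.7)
The plan is to exploit the isomorphism $c \colon \Def_{\mathfrak{X}} \xrightarrow{\sim} \Def_X$ from \eqref{eq:coarsefunctorisom} and to prove that the obstruction space $\Ext^2_{\mathcal{O}_{\mathfrak{X}}}(\Omega^1_{\mathfrak{X}}, \mathcal{O}_{\mathfrak{X}})$ vanishes, which immediately yields the unobstructedness of $\Def_X$. The advantage of passing to the stack, already stressed in the introduction, is that each index-one cover $V_i \to U_i$ is Gorenstein with an isolated cDV singularity, so $\mathfrak{X}$ is itself Gorenstein and $\omega_{\mathfrak{X}}$ is an invertible $\mathcal{O}_{\mathfrak{X}}$-module.

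I would first apply Serre-Grothendieck duality on the proper $3$-dimensional Gorenstein DM stack $\mathfrak{X}$. Combining the tautological identity $\Ext^i_{\mathcal{O}_{\mathfrak{X}}}(F, \mathcal{O}_{\mathfrak{X}}) \simeq \Ext^i_{\mathcal{O}_{\mathfrak{X}}}(F \otimes \omega_{\mathfrak{X}}, \omega_{\mathfrak{X}})$ (valid because $\omega_{\mathfrak{X}}$ is invertible) with Serre duality $\Ext^i_{\mathcal{O}_{\mathfrak{X}}}(G, \omega_{\mathfrak{X}})^{\vee} \simeq H^{n-i}(\mathfrak{X}, G)$ produces a natural isomorphism
\[
\Ext^2_{\mathcal{O}_{\mathfrak{X}}}(\Omega^1_{\mathfrak{X}}, \mathcal{O}_{\mathfrak{X}}) \simeq H^1(\mathfrak{X}, \Omega^1_{\mathfrak{X}} \otimes \omega_{\mathfrak{X}})^{\vee}.
\]
Since $X$ is $\mathbb{Q}$-Fano, a sufficiently divisible power of $\omega_{\mathfrak{X}}^{-1}$ descends to an ample Cartier bundle on $X$, so $L := \omega_{\mathfrak{X}}^{-1}$ is ample on the stack, and the target vanishing $H^1(\mathfrak{X}, \Omega^1_{\mathfrak{X}} \otimes L^{-1}) = 0$ falls in the Kodaira-Akizuki-Nakano range $p + q = 2 < 3 = \dim \mathfrak{X}$.

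To establish this vanishing I would choose for each $i$ a $G_i$-equivariant log resolution $\nu_i \colon \tilde{V}_i \to V_i$ with SNC exceptional divisor $F_i$ (as in the proof of Lemma \ref{lem:H3vanish}), assemble them together with $V_0$ into a smooth projective DM stack $\tilde{\mathfrak{X}} \to \mathfrak{X}$, and invoke the standard smooth Kodaira-Akizuki-Nakano vanishing applied to $\Omega^1_{\tilde{\mathfrak{X}}}(\log F) \otimes \nu^* L^{-1}$. I would then descend to $\mathfrak{X}$ by isolating the appropriate $\mathbb{Z}_{r_i}$-eigensummand of the push-forward, in parallel with the cyclic-cover decomposition used in the proof of Theorem \ref{thm:pairunobs}.

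The hard part will be this descent step: log differentials on $\tilde{\mathfrak{X}}$ must be compared with the reflexive $\Omega^1_{\mathfrak{X}}$, and the twist by $\omega_{\mathfrak{X}}$ has to be traced carefully through the Leray spectral sequence of $\nu$. Overcoming it will likely rely on the rationality of cDV singularities ($R^j \nu_{i*} \mathcal{O}_{\tilde{V}_i} = 0$ for $j > 0$) together with either a Steenbrink-type vanishing for Gorenstein canonical threefolds twisted by an ample bundle or the identification $\nu_{i*} \Omega^1_{\tilde{V}_i}(\log F_i) \simeq \Omega^{[1]}_{V_i}$ available for such singularities. Once $H^1(\mathfrak{X}, \Omega^1_{\mathfrak{X}} \otimes \omega_{\mathfrak{X}}) = 0$ is in hand, no further deformation-theoretic input is required.
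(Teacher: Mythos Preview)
Your reduction coincides with the paper's: pass to $\mathfrak X$ via \eqref{eq:coarsefunctorisom}, twist by the invertible $\omega_{\mathfrak X}$, apply Serre duality on the proper DM stack (the paper cites \cite[Corollary 2.10]{Nironi:2008aa}), and arrive at the target vanishing $H^1(\mathfrak X,\Omega^1_{\mathfrak X}\otimes\omega_{\mathfrak X})=0$. The two approaches diverge only in how this last vanishing is established.

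The paper goes \emph{down} to the coarse space rather than up to a resolution. One checks that $\gamma_*(\Omega^1_{\mathfrak X}\otimes\omega_{\mathfrak X})$ is reflexive on $X$---locally on $U_i$ it is the $G_i$-invariant part of $(\pi_i)_*(\Omega^1_{V_i}\otimes\omega_{V_i})$, and $\Omega^1_{V_i}$ is $S_2$ since $V_i$ has only an isolated hypersurface singularity---hence it agrees with $\iota_*(\Omega^1_{X'}\otimes\omega_{X'})$. The vanishing of $H^1(X,\iota_*(\Omega^1_{X'}\otimes\omega_{X'}))$ is then a direct citation of the Lefschetz-type argument already proved in \cite[Theorem~2.11]{MR3419958}. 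No resolution, no Leray spectral sequence, no eigen-bookkeeping is needed.

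Your route through a smooth stacky resolution $\tilde{\mathfrak X}$ runs into the obstacle you yourself flag: $\nu^*\omega_{\mathfrak X}^{-1}$ is only nef and big on $\tilde{\mathfrak X}$, so the ``standard smooth Kodaira--Akizuki--Nakano'' you invoke does not apply as stated, and Nakano-type vanishing is known to fail for merely nef and big bundles. The Steenbrink / extension-of-reflexive-forms workarounds you sketch are plausible but would require substantive argument---in effect proving a bespoke vanishing theorem for resolutions of terminal Gorenstein points twisted by a pulled-back ample---whereas the paper's descent to $X$ reduces everything to an already-available result on the scheme. What your approach would buy, if completed, is independence from the earlier paper's Lefschetz argument; what the paper's approach buys is brevity.
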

	
	\begin{proof}
		Let $\gamma \colon \mathfrak{X} \rightarrow X$ be the canonical covering stack of $X$ constructed as above. 
		By the isomorphism (\ref{eq:coarsefunctorisom}), it is enough to show that $\Def_{\mathfrak{X}}$ is a smooth functor. 
		We have isomorphisms
		\[
		\Ext^2_{\mathcal{O}_{\mathfrak{X}}}(\Omega^1_{\mathfrak{X}}, \mathcal{O}_{\mathfrak{X}}) \simeq 
		\Ext^2_{\mathcal{O}_{\mathfrak{X}}}(\Omega^1_{\mathfrak{X}} \otimes \omega_{\mathfrak{X}}, \omega_{\mathfrak{X}}) \simeq 
		H^1(\mathfrak{X}, \Omega^1_{\mathfrak{X}} \otimes \omega_{\mathfrak{X}})^{\vee}.   
		\]
		The first isomorphism follows since $\omega_{\mathfrak{X}}$ is invertible. This is the main advantage of considering the canonical covering stack.  
		The second isomorphism follows from the Serre duality on a Deligne-Mumford stack (\cite[Corollary 2.10]{Nironi:2008aa}). 
		\begin{claim}
		We have an isomorphism 
		\[
		H^1(\mathfrak{X}, \Omega^1_{\mathfrak{X}} \otimes \omega_{\mathfrak{X}}) \simeq H^1(X, \iota_* (\Omega^1_{X'} \otimes \omega_{X'})),  
		\]
		 where $\iota \colon X' \hookrightarrow X$ is an open immersion of the smooth part $X'$ of $X$. 
		 \end{claim}
		 \begin{proof}[Proof of Claim]
		 We can check this by the construction of $\gamma \colon \mathfrak{X} \rightarrow X$. 
		 Note that $\gamma_*  (\Omega^1_{\mathfrak{X}} \otimes \omega_{\mathfrak{X}}) $ is reflexive since  
		  we have 
		 \[
		 \gamma_*  (\Omega^1_{\mathfrak{X}} \otimes \omega_{\mathfrak{X}}) |_{U_i} \simeq \left (\pi_i)_* (\Omega^1_{V_i} \otimes \omega_{V_i})\right)^{G_i}
		 \] 
		 on an affine neighborhood $U_i$ of $p_i$ for $i=1, \ldots, l$ and $\Omega^1_{V_i}$ is $S_2$ sheaf on $V_i$ with only 
		 Gorenstein terminal singularities. 
		 Thus we have an isomorphism 
		 \[
		 \gamma_*  (\Omega^1_{\mathfrak{X}} \otimes \omega_{\mathfrak{X}}) \simeq \iota_* (\Omega^1_{X'} \otimes \omega_{X'}) 
		 \] 
		 since both sheaves are reflexive and coincide on $X'$. This isomorphism induces the isomorphism of cohomology groups. 
		 \end{proof}
		 
		 Thus it is enough to check $H^1(X, \iota_* (\Omega^1_{X'} \otimes \omega_{X'}))=0$. 
		 This can be checked by a variant of Lefschetz hyperplane section theorem as in \cite[Theorem 2.11]{MR3419958}. 
		 \end{proof}

\begin{rem}
	The proof of Theorem \ref{thm:QFano3unobs} gives a new proof of \cite[Theorem 1.7]{MR3419958}. 
	In the proof of \cite[Theorem 1.7]{MR3419958}, we need to compare delicately   
	the deformations of a $\mathbb{Q}$-Fano $3$-fold and its smooth part. 
	The new proof avoids this issue.
	\end{rem}

\vspace{5mm}

By using the canonical covering stack, 
we can also prove the unobstructedness of $\mathbb{Q}$-Gorenstein deformations 
of a log del Pezzo surface. 

\begin{thm}(cf. \cite[Proposition 3.1]{Hacking-Prokhorov}, \cite[Lemma 6]{Akhtar:2015aa})
	Let $S$ be a log weak del Pezzo surface, that is, a normal projective surface with only klt singularities 
	with a nef and big anticanonical divisor $-K_S$. 
	
	Then the $\mathbb{Q}$-Gorenstein deformation functor $\Def^{qG}_S$ is unobstructed. 
	(See \cite[Definition 2.2]{MR2488488} for the definition of $\Def^{qG}_S$) 
	\end{thm}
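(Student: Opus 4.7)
The plan is to mirror the stacky argument of Theorem \ref{thm:QFano3unobs}. First, I would construct the canonical covering stack $\gamma \colon \mathfrak{S} \rightarrow S$ of the klt surface $S$ exactly as in Section \ref{section:stack}: at each singular point $p_i \in \Sing S$ pick a small affine neighbourhood $U_i$, form the index-one cover $\pi_i \colon V_i \rightarrow U_i$ with Galois group $G_i \simeq \mathbb{Z}_{r_i}$, and glue these with the smooth locus $V_0 := S \setminus \Sing S$ via the \'{e}tale groupoid space $W \rightrightarrows V$ of Section \ref{section:stack}. The associated Deligne--Mumford stack is $\mathfrak{S}$; its coarse moduli space is $S$, and $\omega_{\mathfrak{S}}$ is invertible by construction. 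I would then reproduce the lemma proved around (\ref{eq:coarsefunctorisom}) to get an isomorphism of functors $c \colon \Def_{\mathfrak{S}} \xrightarrow{\sim} \Def_S^{qG}$; the only input needed is the flatness of the $\omega_S^{[j]}$ under deformation, which follows from Proposition \ref{prop:S3flat} (klt surface singularities are rational, hence Cohen--Macaulay, and in dimension $2$ this suffices), together with the standard fact that $\mathbb{Q}$-Gorenstein deformations are by definition those admitting $G_i$-equivariant lifts of the index-one covers.

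Next, the obstruction class $o_{\xi_n} \in \Ext^2_{\mathcal{O}_{\mathfrak{S}}}(\Omega^1_{\mathfrak{S}}, \mathcal{O}_{\mathfrak{S}})$ for lifting a deformation $\xi_n \in \Def_{\mathfrak{S}}(A_n)$ to $A_{n+1}$ is built by the same cotangent sequence argument as in the 3-fold case. Since $\omega_{\mathfrak{S}}$ is invertible and $\mathfrak{S}$ is a proper $2$-dimensional Deligne--Mumford stack, Serre duality (\cite[Corollary 2.10]{Nironi:2008aa}) gives
\[
\Ext^2_{\mathcal{O}_{\mathfrak{S}}}(\Omega^1_{\mathfrak{S}}, \mathcal{O}_{\mathfrak{S}})
\simeq \Ext^2_{\mathcal{O}_{\mathfrak{S}}}(\Omega^1_{\mathfrak{S}} \otimes \omega_{\mathfrak{S}}, \omega_{\mathfrak{S}})
\simeq H^0(\mathfrak{S}, \Omega^1_{\mathfrak{S}} \otimes \omega_{\mathfrak{S}})^\vee.
\]
By a reflexivity argument identical to the Claim inside the proof of Theorem \ref{thm:QFano3unobs}, the pushforward $\gamma_*(\Omega^1_{\mathfrak{S}} \otimes \omega_{\mathfrak{S}})$ is a reflexive sheaf on $S$ agreeing with $\iota_*(\Omega^1_{S'} \otimes \omega_{S'})$ on the smooth locus $\iota \colon S' \hookrightarrow S$, so
\[
H^0(\mathfrak{S}, \Omega^1_{\mathfrak{S}} \otimes \omega_{\mathfrak{S}}) \simeq H^0(S, \iota_*(\Omega^1_{S'} \otimes \omega_{S'})).
\]

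The remaining task is the vanishing $H^0(S, \iota_*(\Omega^1_{S'} \otimes \omega_{S'})) = 0$. I would pass to a log resolution $\mu \colon \tilde{S} \rightarrow S$ with SNC exceptional divisor $F$, use that klt surface singularities are rational quotient singularities so that $\mu_* \Omega^1_{\tilde{S}}(\log F)$ computes the reflexive differentials, and then reduce via the eigenspace decomposition under the local Galois action to a question about a line bundle related to $\mu^*(-K_S)$ on $\tilde{S}$. By Serre duality on $\tilde{S}$, the vanishing is equivalent to one of the form $H^2(\tilde{S}, \Omega^1_{\tilde{S}}(\log F) \otimes \mathcal{L})=0$ for $\mathcal{L}$ associated with a nef and big $\mathbb{Q}$-divisor (with klt discrepancy correction), which is exactly the setting of the logarithmic Akizuki--Nakano--Kawamata--Viehweg vanishing theorem.

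The hard part is this last vanishing: classically for smooth del Pezzo surfaces it is immediate from Kodaira--Nakano, but in the klt weak setting, one must combine the Kawamata--Viehweg weakening (to allow $-K_S$ merely nef and big) with the Akizuki--Nakano strengthening (to place $\Omega^1$ rather than $\mathcal{O}$ in front of the line bundle), while keeping careful track of the klt discrepancies under $\mu$. The required statement is essentially that of \cite[Proposition 3.1]{Hacking-Prokhorov} and \cite[Lemma 6]{Akhtar:2015aa}, whose argument transports cleanly to our setting; the stacky formulation here simply packages the $\mathbb{Q}$-Gorenstein obstruction into a single clean Serre-dual cohomology group.
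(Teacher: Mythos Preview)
Your stacky setup, obstruction class, Serre duality, and identification $H^0(\mathfrak{S},\Omega^1_{\mathfrak{S}}\otimes\omega_{\mathfrak{S}})\simeq H^0(S,\iota_*(\Omega^1_{S'}\otimes\omega_{S'}))$ all match the paper exactly. Two points deserve comment.

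First, a small issue: you invoke Proposition~\ref{prop:S3flat} to get flatness of $\omega_S^{[j]}$ and hence the isomorphism $\Def_{\mathfrak{S}}\simeq\Def_S^{qG}$, but that proposition requires depth $\ge 3$ along the removed locus, which fails for isolated surface singularities (depth is only $2$). The paper avoids this by citing Hacking \cite[Proposition~3.7]{MR2078368} directly; alternatively, note that flatness and base change for $\omega^{[j]}$ are built into the \emph{definition} of $\mathbb{Q}$-Gorenstein deformation, so no depth argument is needed.

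Second, and more substantively, your final vanishing step diverges from the paper and is left vague. The paper simply rewrites
\[
H^0(S,(\Omega^1_S\otimes\omega_S)^{\vee\vee})\simeq \Hom(\mathcal{O}_S(-K_S),(\Omega^1_S)^{\vee\vee})
\]
and kills this in one line by the Bogomolov--Sommese vanishing theorem for log canonical varieties: a reflexive rank-one subsheaf of $\Omega_S^{[1]}$ has Kodaira dimension at most $1$, whereas $-K_S$ is big. Your proposed route through a log resolution, ``eigenspace decomposition under the local Galois action'' (which does not belong here---the resolution is not a Galois cover), and a log Akizuki--Nakano--Kawamata--Viehweg statement with unspecified $\mathcal{L}$ is essentially an attempt to reprove Bogomolov--Sommese from scratch; it could in principle be made to work, but as written it is not a proof. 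Replacing that paragraph with a direct citation of Bogomolov--Sommese (e.g.\ Greb--Kebekus--Kov\'acs--Peternell) gives exactly the paper's argument. The references \cite{Hacking-Prokhorov} and \cite{Akhtar:2015aa} are prior proofs of the theorem itself, not of the vanishing ingredient you need.
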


\begin{proof}
	Let $\mathfrak{S} \rightarrow S$ be the canonical covering stack of $S$ 
	induced from canonical coverings of singularities of $S$ (\cite[3.1]{MR2078368}). Since we have $\Def_{\mathfrak{S}} \simeq \Def^{qG}_S$ (\cite[Proposition 3.7]{MR2078368}), 
	it is enough to show that $\Def_{\mathfrak{S}}$ is unobstructed. 
	
	We see that \[
	\Ext^2_{\mathcal{O}_{\mathfrak{S}}}( \Omega^1_{\mathfrak{S}}, \mathcal{O}_{\mathfrak{S}}) \simeq \Ext^2_{\mathcal{O}_{\mathfrak{S}}}( \Omega^1_{\mathfrak{S}} \otimes \omega_{\mathfrak{S}}, 
	\omega_{\mathfrak{S}}) \simeq H^0(\mathfrak{S}, \Omega^1_{\mathfrak{S}} \otimes \omega_{\mathfrak{S}}).
	\] 
	We also see that 
	\[
	H^0(\mathfrak{S}, \Omega^1_{\mathfrak{S}} \otimes \omega_{\mathfrak{S}}) \simeq H^0(S, (\Omega^1_S \otimes \omega_S)^{\vee \vee}) 
	\simeq \Hom (\mathcal{O}_S(-K_S), (\Omega^1_S)^{\vee \vee}).  
	\]
	This vanishes by the Bogomolov-Sommese vanishing theorem on a log canonical variety. Thus we are done. 
	\end{proof}

\section*{Acknowledgment}
	The author would like to thank Professor Yoshinori Namikawa for useful comments. 
	Theorem \ref{thm:wqfano3intro} is an answer to his question. 
	The author would like to thank Professor Alessio Corti for his lecture in Udine in 2014 
	which leads the author to  
	 an idea of using canonical covering stacks. 
	The author is also grateful to the anonymous referee for pointing out many missing 
	parts.

\bibliographystyle{amsalpha}
\bibliography{sanobibs}

\end{document}